\numberwithin{equation}{section}
\newtheorem{thm}{Theorem}[section]
\newtheorem{prop}[thm]{Proposition}
\newtheorem{cor}[thm]{Corollary}
\newtheorem{lem}[thm]{Lemma}
\newtheorem{ex}[thm]{Example}
\newtheorem{defn}[thm]{Definition}
\newtheorem{rem}[thm]{Remark}
\author{}
\title{Pedal curves of hyperbolic frontals and their singularities}
\author{\\\
        {\bf O. O\u{g}ulcan Tuncer}$^{1,2}$ \thanks{E-mail address:
        otuncer@hacettepe.edu.tr}
    {\,, \bf \.{I}smail G\"{o}k}$^2$\thanks{E-mail address:
       igok@science.ankara.edu.tr}
 \medskip\\
   \\$^1$Department of Mathematics, Hacettepe University,
                    \\ 06800 Beytepe, Ankara, Turkey
  \medskip\\$^2$Department of Mathematics, Ankara University,
                   \\ 06100 Be\c{s}evler, Ankara, Turkey}
\begin{document}

\maketitle
\begin{abstract}
This paper introduces pedal curves of spacelike frontals in the hyperbolic 2-space. We mainly investigate the singularities of these hyperbolic pedal curves of spacelike frontals for non-singular and singular dual curve germs. We then show that for non-singular dual curve germs, the singularities of a pedal curve are dependent on the singularities of the first hyperbolic Legendrian curvature germ and the location of the pedal point, while for singular dual curve germs, they depend upon the singularities of both hyperbolic Legendrian curvature germs and also the location of the pedal point. We provide several examples with figures.

\end{abstract}
\section{Introduction}
Generating new curves based on some rules from a given curve is one of the widely studied problems in Differential Geometry. Among these curves, pedal curves have an importance. A pedal curve of a regular curve in the Euclidean plane is the locus of the feet of the perpendiculars from a fixed point (the pedal point) to the tangent lines along the curve \cite{gray,zwikker}. This definition yields pedal coordinates of a point on the curve with respect to the curve and the pedal point. Based on these coordinates, we are able to write the pedal equation of a given curve. Furthermore, pedal coordinates are practical for solving specific force problems in the classical and celestial mechanics. In \cite{blaschke} the author states that the trajectory of a particle under central and Lorentz-like forces can be converted to pedal coordinates at once without need of solving any differential equation. Then the methods developed in \cite{blaschke} is used to solve dark Kepler problem. Pedal coordinates are also used to investigate orbits of a free double linkage \cite{blaschke2} and certain variational problems \cite{blaschke3}.

We can give a more general definition of pedal curves: A pedal curve of a regular curve is defined as the locus of the nearest point in the geodesic, which is tangent to the curve at a point, from a given point. This definition allows us to give parametric representations of pedal curves by using the orthogonal projection and the Frenet frame along the curve. But if the curve is not regular at some points, then the pedal curve cannot be defined as above since the Frenet frame is not well-defined along the curve. In \cite{fukunaga} Fukunaga and Takahashi examined Legendre curves in the unit tangent bundle of the Euclidean plane and introduced a moving frame called the Legendrian Frenet frame along the curve. This frame is well-defined even at singular points of the curve. Using the Legendrian Frenet frame, Fukunaga and Takahashi \cite{fukunaga2,fukunaga3,fukunaga4} provided the definitions of the evolute and involute of a curve, which may have singularities. After this study, spherical fronts in the Euclidean 2-sphere were defined, and evolutes of spherical fronts were examined by Yu et al. \cite{yu}. Li and Pei \cite{li} considered pedal curves of spherical fronts by using the definition of pedal curves of spherical regular curves \cite{nishimura,nishimura2}. Moreover Chen and Takahashi \cite{chen} introduced spacelike and timelike frontals in the hyperbolic and de Sitter 2-spaces, and provided Legendrian Frenet frames along these spacelike and timelike frontals. Using these frames, they defined evolutes and parallels of timelike and spacelike fronts. Recently, many studies on frontals and framed curves (in 3D setting) have been proposed (\cite{bekar,honda,honda2,li2,li3,li4,li5,li6,li7,tuncer}).


The purpose of this paper is to introduce hyperbolic pedal curves of frontals in the hyperbolic 2-space, and investigate singularities of these pedal curves.

This paper is organized as follows. In Section 2 we give some preliminaries on Minkowski geometry and spacelike frontals in hyperbolic 2-space. We also introduce pedal curves of regular curves in hyperbolic 2-space. In Section 3 the definitions of hyperbolic pedal curves of spacelike frontals are provided, and some geometric properties of these curves are given. In Section 4 we give a complete classification of singularities of hyperbolic pedal curves based on the dual curve germs being non-singular or singular. In Section 5 we conclude about our results.

\section{Preliminaries}
The Lorentz Minkowski 3-space $\mathbb{R}^3_1$ is the real vector space with a pseudo scalar product given by 
\begin{equation*}
	\langle u,w\rangle=-u_1w_1+u_2w_2+u_3w_3, 
\end{equation*}
where $u=(u_1,u_2,u_3),\, w=(w_1,w_2,w_3)\in\mathbb{R}^3$. \\ 
\indent The vectors in $\mathbb{R}^3_1$ are classified depending on this pseudo scalar product. Consider a non-zero vector $u=(u_1,u_2,u_3)\in\mathbb{R}^3_1$. The vector $u$ is called a spacelike, a timelike or a lightlike (null) vector if $\langle u,u\rangle>0$, $\langle u,u\rangle<0$ or $\langle u,u\rangle=0$, respectively. The pseudo-norm of the vector $u$ is given by $\|u\|=\sqrt{\lvert\langle u,u\rangle\rvert}$. For two arbitrary vectors $u=(u_1,u_2,u_3)$ and $w=(w_1,w_2,w_3)$, the Lorentzian vector product is defined by
\begin{equation*}
	u\wedge w = 
	\begin{vmatrix}
		-e_1& e_2 & e_3\\
		u_1& u_2 & u_3 \\
		w_1& w_2 & w_3 
	\end{vmatrix} 
	= (-u_2w_3+u_3w_2, u_3w_1-u_1w_3, -u_2w_1+u_1w_2),
\end{equation*}
where the set $\{e_1,e_2,e_3\}$ is the canonical basis of $\mathbb{R}^3_1$.\\ \indent
In Lorentz Minkowski 3-space, curves are classified depending on their tangent vectors. A curve is said to be spacelike, timelike, or lightlike (null) if the tangent vector of the curve is spacelike, timelike, or lightlike (null), respectively.

Now we recall the two dimensional pseudo spheres in the Lorentz Minkowski 3-space. The hyperbolic 2-space, de Sitter 2-space, and lightlike cone at the origin are respectively defined by
\begin{align*}
	&\mathcal{H}^2=\{\mathbf{u}\in\mathbb{R}^3_1\,\vert\,\langle \mathbf{u},\mathbf{u}\rangle=-1 \},\\
	&d\mathcal{S}^2=\{\mathbf{u}\in\mathbb{R}^3_1\,\vert\,\langle \mathbf{u},\mathbf{u}\rangle=1 \},\\
	&LC^*=\{\mathbf{u}\in\mathbb{R}^3_1\backslash\{\mathbf{0}\} \,\vert\,\langle \mathbf{u},\mathbf{u}\rangle=0 \}.
\end{align*} 
For further details about Lorentz Minkowski space see \cite{o'neil, lopez}. 
\subsection{Pedal curves of regular curves in hyperbolic 2-space}
Let $\mathbf{r}_h:I\to \mathcal{H}^2$ be a regular curve, that is $\|{\mathbf{r}}'_h(s)\|\neq0$ for all $s\in I$, where ${\mathbf{r}}'_h(s)$ denotes the derivative of $\mathbf{r}_h$ with respect to $s$. Since $\mathbf{r}_h$ is a regular curve, the unit spacelike tangent vector field $T_h(s)={\mathbf{r}}'_h(s)/\|{\mathbf{r}}'_h(s)\|$ to the curve is well-defined. Hence taking a unit spacelike vector $N_h=\mathbf{r}_h\wedge T_h$, we obtain a pseudo orthonormal frame $\{\mathbf{r}_h,T_h,N_h\}$ along $\mathbf{r}_h$ called the hyperbolic Frenet frame. Then the hyperbolic Frenet-Serret formulas of this frame are given by
\begin{equation}
	\begin{pmatrix}
		{\mathbf{r}}'_h(s)\\
		{T}'_h(s)\\
		{N}'_h(s)
	\end{pmatrix}=\|{\mathbf{r}}'_h(s)\| \begin{pmatrix}
		0 & 1 & 0 \\ 1 & 0 & \kappa_h(s) \\ 0 & -\kappa_h(s) & 0
	\end{pmatrix}\begin{pmatrix}
		\mathbf{r}_h(s)\\ T_h(s) \\ N_h(s)
	\end{pmatrix},
\end{equation}
where $\kappa_h(s)=\dfrac{\det(\mathbf{r}_h(s),{\mathbf{r}}'_h(s),{\mathbf{r}}''_h(s))}{\| {\mathbf{r}}'_h(s)\|^3}$ is called the hyperbolic geodesic curvature. 

The pedal curve of $\mathbf{r}_h$ with respect to the point  $Q\in \mathcal{H}^2$ is given by
\begin{equation}\label{regpedal}
	Ped_Q(\mathbf{r}_h)(s)=\dfrac{1}{\sqrt{1+\langle Q, N_h(s)\rangle ^2}}(Q-\langle Q,N_h(s)\rangle N_h(s)),
\end{equation}
It is easy to see that $Ped_Q(\mathbf{r}_h)(s)$ is located in $\mathcal{H}^2$.
\subsection{Spacelike frontals in the hyperbolic 2-space}
In this section we briefly review spacelike frontals in the hyperbolic 2-space. See \cite{chen} for more information.
Suppose that $\mathbf{r}_h:I\to \mathcal{H}^2$ is a spacelike curve at regular points. If there exists a smooth map $v_h:I\to d\mathcal{S}^2$ such that $(\mathbf{r}_h,v_h):I\to \Delta_1$ satisfies $(\mathbf{r}_h(s),v_h(s))^*\theta=0$ for any $s\in I$, then the curve $\mathbf{r}_h$ (resp. the pair $(\mathbf{r}_h,v_h)$) is called a spacelike frontal (resp. a spacelike Legendre curve) in hyperbolic 2-space (resp. in $\Delta_1$), where
\begin{equation*}
	\Delta_1=\{(\mathbf{u},\mathbf{w})\,\vert\,\langle\mathbf{u},\mathbf{w}\rangle=0\}\subset \mathcal{H}^2 \times d\mathcal{S}^2
\end{equation*}
is a 3-dimensional manifold, and $\theta$ is a canonical contact 1-form on $\Delta_1$. The condition $(\mathbf{r}_h(s), v_h(s))^*\theta=0$ means $\langle {\mathbf{r}}'_h(s), v_h(s)\rangle=0$ for all $s\in I$. In addition, if $(\mathbf{r}_h,v_h)$ is an immersion, then this curve is called a spacelike Legendrian immersion in $\Delta_1$, and $\mathbf{r}_h$ is called a spacelike front in the hyperbolic 2-space. The set $\{\mathbf{r}_h(s), v_h(s),\mu_h(s)=\mathbf{r}_h(s)\wedge v_h(s) \}$ is a pseudo orthonormal frame along $\mathbf{r}_h$ called the hyperbolic Legendrian Frenet frame. Moreover this frame is well-defined even at singular points of $\mathbf{r}_h$. The hyperbolic Legendrian Frenet-Serret formulas are given by
\begin{equation}\label{eq2.3}
	\begin{pmatrix}
		{\mathbf{r}}'_h(s)\\
		{v}'_h(s)\\
		{\mu}'_h(s)
	\end{pmatrix}=\begin{pmatrix}
		0 & 0 & \ell_h(s) \\ 0& 0 & m_h(s) \\ \ell_h(s) & -m_h(s) & 0
	\end{pmatrix}\begin{pmatrix}
		\mathbf{r}_h(s)\\ v_h(s) \\ \mu_h(s)
	\end{pmatrix},
\end{equation}
where, the pair $( \ell_h(s)=\langle {\mathbf{r}}'_h(s), \mu_h(s)\rangle, m_h(s)=\langle {v}'_h(s), \mu_h(s)\rangle)$ is called the spacelike hyperbolic Legendrian curvature of $(\mathbf{r}_h,v_h)$. Furthermore we call $v_h$ the dual curve of $\mathbf{r}_h$.
\section{Pedal curves of spacelike frontals in the hyperbolic plane}
Consider a spacelike Legendre curve $(\mathbf{r}_h,v_h)$ with spacelike hyperbolic Legendre curvature $(\ell_h,m_h)$. The pedal curve of $\mathbf{r}_h$ with respect to the pedal point $Q\in \mathcal{H}^2_+$ is the locus of the nearest point in the geodesic $G_{v_h(s)}$ from $Q$ that is tangent to the vector $\mu_h$ at a point (See Fig. \ref{pedal}).
\begin{figure}[H] 
	\centering
	\includegraphics[width=0.5\textwidth]{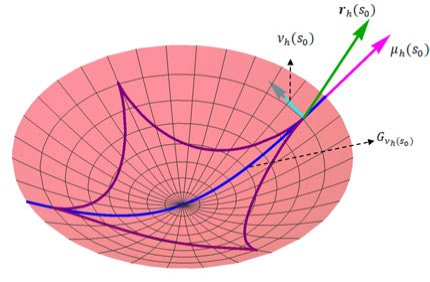}
	\caption{Geometry of a pedal curve in the hyperbolic 2-space} \label{pedal}
\end{figure}
By this definition, $Q$ and $\mathbf{r}_h$ must lie on the same part of the hyperbolic 2-space. The pedal curve $\mathcal{P}ed_Q(\mathbf{r}_h)(s):I\to \mathcal{H}^2$ of the spacelike frontal $\mathbf{r}_h(s)$ relative to the point $Q\in \mathcal{H}^2$ is parametrized by
\begin{equation}\label{eq3.2}
	\mathcal{P}ed_Q(\mathbf{r}_h)(s)=\dfrac{1}{\sqrt{1+\langle Q, v_h(s)\rangle ^2}}(Q-\langle Q,v_h(s)\rangle v_h(s)),
\end{equation}
Note that $Q\in\mathcal{H}^2$ and $v_h(s_0)\in d\mathcal{S}^2$. So $Q=v_h(s_0)$ is not allowed in \eqref{eq3.2}.
\begin{prop}\label{prop3.2}
	Let $\mathbf{r}_h:I\to \mathcal{H}^2$ be a regular curve, and let $Q$ be a point in $\mathcal{H}^2$. Then the pedal curve of the regular spacelike curve coincides with the pedal curve of the spacelike front.
\end{prop}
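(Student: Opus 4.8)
The plan is to produce the coincidence by exhibiting the \emph{canonical} Legendrian lift of the regular curve and then observing that the two pedal formulas become literally the same expression. Concretely, given a regular spacelike curve $\mathbf{r}_h$ with hyperbolic Frenet frame $\{\mathbf{r}_h,T_h,N_h\}$, the natural candidate for the dual curve is $v_h=N_h$. The whole argument then rests on checking that this choice makes $\mathbf{r}_h$ a spacelike front and that formula (\ref{eq3.2}) reduces to the regular pedal formula.

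First I would verify that the pair $(\mathbf{r}_h,N_h)$ is a spacelike Legendrian immersion. This needs three observations: (i) $N_h\in d\mathcal{S}^2$, since $N_h=\mathbf{r}_h\wedge T_h$ is a unit spacelike vector; (ii) the contact condition $\langle\dot{\mathbf{r}}_h,N_h\rangle=0$, which is immediate from $\dot{\mathbf{r}}_h=\|\dot{\mathbf{r}}_h\|T_h$ together with the orthogonality $\langle T_h,N_h\rangle=0$; and (iii) the immersion condition, which holds because regularity of $\mathbf{r}_h$ gives $\dot{\mathbf{r}}_h\neq 0$, hence $(\dot{\mathbf{r}}_h,\dot{v}_h)\neq(\mathbf{0},\mathbf{0})$. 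Thus $\mathbf{r}_h$ is genuinely a spacelike front (not merely a frontal), which is precisely what the statement asserts, and $v_h=N_h$ is an admissible dual curve.

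Next I would pin down the third frame vector and confirm compatibility of the two frames. Using the pseudo vector product relations for the positively oriented pseudo-orthonormal Frenet basis (with $\mathbf{r}_h$ timelike), one gets $\mu_h=\mathbf{r}_h\wedge v_h=\mathbf{r}_h\wedge N_h=-T_h$. Substituting this into (\ref{eq2.3}) and comparing with the Frenet--Serret formulas, I would read off the spacelike hyperbolic Legendrian curvature as $(\ell_h,m_h)=(-\|\dot{\mathbf{r}}_h\|,\ \|\dot{\mathbf{r}}_h\|\kappa_h)$, which verifies that the Frenet data and the Legendrian data are consistent under the identification $v_h=N_h$. Finally, with $v_h=N_h$, the frontal pedal expression (\ref{eq3.2}) is term-for-term identical to the regular pedal expression, so the two pedal curves coincide.

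The main point is conceptual rather than computational: the frontal definition (\ref{eq3.2}) depends on a chosen dual curve $v_h$, whereas the regular definition uses the canonically determined normal $N_h$, so the crux is recognizing that the correct Legendrian lift of a regular curve is exactly $v_h=N_h$. (One may note in passing that the sign of $v_h$ is immaterial, since (\ref{eq3.2}) depends only on $\langle Q,v_h\rangle^2$ and $\langle Q,v_h\rangle v_h$, both invariant under $v_h\mapsto -v_h$.) The only subtlety to handle with care is step (iii) above: it is regularity of $\mathbf{r}_h$ that upgrades the frontal to a front, and this is where the hypothesis is genuinely used.
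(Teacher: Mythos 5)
Your proposal is correct and follows essentially the same route as the paper's proof: take the canonical lift $v_h=N_h$, identify $(\mathbf{r}_h,N_h)$ as a spacelike Legendre immersion with curvature $(\ell_h,m_h)=(-\|\dot{\mathbf{r}}_h\|,\|\dot{\mathbf{r}}_h\|\kappa_h)$, and observe that formula \eqref{eq3.2} then coincides term-for-term with the regular pedal formula. Your verifications of the contact and immersion conditions and the computation $\mu_h=\mathbf{r}_h\wedge N_h=-T_h$ merely spell out what the paper asserts without proof, and they check out.
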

\begin{proof}
	Without loss of generality, by taking $v_h(s)=N_h(s)$ we obtain a spacelike Legendre immersion $(\mathbf{r}_h, N_h)$ with the spacelike hyperbolic Legendre curvature $(-\|{\mathbf{r}}'_h\|,\|{\mathbf{r}}'_h\|\kappa_h)$. From \eqref{regpedal} and \eqref{eq3.2}
	\begin{align*}
		Ped_Q(\mathbf{r}_h)(s)&=\dfrac{1}{\sqrt{1+\langle Q, N_h(s)\rangle ^2}}(Q-\langle Q,N_h(s)\rangle N_h(s))\\
		&=\dfrac{1}{\sqrt{1+\langle Q, v_h(s)\rangle ^2}}(Q-\langle Q,v_h(s)\rangle v_h(s))=	\mathcal{P}ed_Q(\mathbf{r}_h)(s).
	\end{align*}
\end{proof}
\begin{prop}\label{prop3.3}
	Let $(\mathbf{r}_h,v_h):I\to\Delta_1$ be a spacelike Legendre curve with spacelike hyperbolic Legendre curvature $(\ell_h,m_h)$, and let $Q$ be a point in $\mathcal{H}^2$. Then the pedal curve $\mathcal{P}ed_Q(\mathbf{r}_h)$ of the spacelike frontal $\mathbf{r}_h$ is not dependent of the parametrization of $(\mathbf{r}_h,v_h)$.
\end{prop}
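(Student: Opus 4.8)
The plan is to show that reparametrizing the Legendre curve only reparametrizes the pedal curve in exactly the same way, so that its image in $\mathcal{H}^2$ is left unchanged. Recall that a reparametrization of $(\mathbf{r}_h,v_h):I\to\Delta_1$ is given by a diffeomorphism $\phi:J\to I$ (so $\dot\phi(t)\neq 0$ for all $t\in J$), producing the new pair $(\tilde{\mathbf{r}}_h,\tilde v_h)=(\mathbf{r}_h\circ\phi,\,v_h\circ\phi):J\to\mathcal{H}^2\times d\mathcal{S}^2$. The first thing I would verify is that this pair is still a spacelike Legendre curve, so that the construction \eqref{eq3.2} even applies to it: by the chain rule $\dot{\tilde{\mathbf{r}}}_h(t)=\dot\phi(t)\,\dot{\mathbf{r}}_h(\phi(t))$, and hence
\begin{equation*}
\langle \dot{\tilde{\mathbf{r}}}_h(t),\tilde v_h(t)\rangle=\dot\phi(t)\,\langle \dot{\mathbf{r}}_h(\phi(t)),v_h(\phi(t))\rangle=0,
\end{equation*}
so the contact condition is inherited directly from $(\mathbf{r}_h,v_h)$.

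The key observation, which makes the statement essentially immediate, is that the defining formula \eqref{eq3.2} depends on the Legendre curve only through the pointwise value $v_h(s)$ of the dual curve together with the fixed point $Q$; no derivative of $\mathbf{r}_h$ or of $v_h$ enters. Consequently, substituting $\tilde v_h(t)=v_h(\phi(t))$ directly into \eqref{eq3.2} gives
\begin{equation*}
\mathcal{P}ed_Q(\tilde{\mathbf{r}}_h)(t)=\dfrac{1}{\sqrt{1+\langle Q,v_h(\phi(t))\rangle^2}}\bigl(Q-\langle Q,v_h(\phi(t))\rangle\,v_h(\phi(t))\bigr)=\mathcal{P}ed_Q(\mathbf{r}_h)(\phi(t)).
\end{equation*}
Thus the two pedal curves differ only by the reparametrization $\phi$, and since $\phi$ is a diffeomorphism they trace out the same subset of $\mathcal{H}^2$; this is exactly the asserted parametrization independence.

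Because the computation collapses to a single substitution, the only real care needed is in the bookkeeping around the dual curve, and this is where I would place the emphasis rather than on any analytic obstacle. In particular, I would point out that the mild non-uniqueness of $v_h$ — for instance the orientation-reversing choice $v_h\mapsto -v_h$ — does not affect the conclusion: the expression \eqref{eq3.2} is even in $v_h$, since both $\langle Q,v_h\rangle^2$ and the product $\langle Q,v_h\rangle\,v_h$ are unchanged under $v_h\mapsto -v_h$. Hence the pedal curve is well-defined not merely up to reparametrization but also independently of the sign convention for the dual curve, which is the main subtlety to make explicit.
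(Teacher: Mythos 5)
Your proof is correct and takes essentially the same route as the paper's: both arguments consist of substituting the reparametrized dual curve $\tilde v_h=v_h\circ\phi$ directly into \eqref{eq3.2} and reading off $\mathcal{P}ed_Q(\tilde{\mathbf{r}}_h)=\mathcal{P}ed_Q(\mathbf{r}_h)\circ\phi$. Your two extra checks --- that the contact condition $\langle\dot{\mathbf{r}}_h,v_h\rangle=0$ is inherited by the reparametrized pair, and that \eqref{eq3.2} is invariant under $v_h\mapsto -v_h$ --- are sound refinements left implicit in the paper but do not alter the argument.
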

\begin{proof}
	Suppose that $(\mathbf{r}_h,v_h):I\to \Delta_1\subset \mathcal{H}^2\times d\mathcal{S}^2$ and $(\tilde{\mathbf{r}}_h,\tilde{v}_h):\tilde{I}\to \Delta_1\subset \mathcal{H}^2\times d\mathcal{S}^2$ are parametrically equivalent by means of a (positive) change of parameter $s:\tilde{I}\to I$. Then we have $(\tilde{\mathbf{r}}_h(\xi),\tilde{v}_h(\xi))=(\mathbf{r}_h(s(\xi)),v_h(s(\xi)))$, and so we obtain
	\begin{align*}
		\mathcal{P}ed_Q(\tilde{\mathbf{r}}_h)(\xi)&=\dfrac{1}{\sqrt{1+\langle Q, \tilde{v}_h(\xi)\rangle ^2}}(Q-\langle Q,\tilde{v}_h(\xi)\rangle \tilde{v}_h(\xi))\\
		&=\dfrac{1}{\sqrt{1+\langle Q, v_h(s(\xi))\rangle ^2}}(Q-\langle Q,v_h(s(\xi))\rangle v_h(s(\xi)))=	\mathcal{P}ed_Q(\mathbf{r}_h)(s(\xi)).
	\end{align*}	
\end{proof}

\begin{thm}\label{thm3.3}
	Consider a spacelike Legendre curve $(\mathbf{r}_h,v_h)$ with the spacelike hyperbolic Legendre curvature $(\ell_h,m_h)$. The pedal curve $\mathcal{P}ed_Q(\mathbf{r}_h)$ of the spacelike frontal $\mathbf{r}_h$ relative to $Q\in \mathcal{H}^2$ has a singular point at $s_0$ if and only if $m_h(s_0)=0$ or $Q=\mathbf{r}_h(s_0)$. 
\end{thm}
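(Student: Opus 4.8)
The plan is to differentiate the pedal curve directly and read off the vanishing locus of its velocity, since $s_0$ is a singular point of $\mathcal{P}ed_Q(\mathbf{r}_h)$ precisely when $\frac{d}{ds}\mathcal{P}ed_Q(\mathbf{r}_h)(s_0)=0$. Writing $f(s)=\langle Q,v_h(s)\rangle$ and $g(s)=\langle Q,\mu_h(s)\rangle$, formula \eqref{eq3.2} reads $\mathcal{P}ed_Q(\mathbf{r}_h)=(1+f^2)^{-1/2}(Q-f v_h)$. Differentiating and using the hyperbolic Legendrian Frenet--Serret formulas \eqref{eq2.3}, in particular $\dot v_h=m_h\mu_h$ (whence $\dot f=\langle Q,\dot v_h\rangle=m_h g$), every resulting term carries either the factor $\dot f=m_h g$ or the factor $\dot v_h=m_h\mu_h$. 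First I would collect these terms and factor out $m_h$, obtaining an expression of the form $\frac{d}{ds}\mathcal{P}ed_Q(\mathbf{r}_h)=m_h\,B$, where $B$ is a vector field along the curve.

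This factorization settles one direction at once: if $m_h(s_0)=0$ then the velocity vanishes, so $s_0$ is singular. It also handles the point $Q=\mathbf{r}_h(s_0)$, since there $f(s_0)=\langle\mathbf{r}_h,v_h\rangle=0$ and $g(s_0)=\langle\mathbf{r}_h,\mu_h\rangle=0$ by orthonormality of the frame, and I will verify below that these two equalities force $B(s_0)=0$.

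For the converse I would analyze exactly when $B=0$. The key step is to expand $Q$ in the pseudo-orthonormal basis $\{\mathbf{r}_h,v_h,\mu_h\}$; since $\langle\mathbf{r}_h,\mathbf{r}_h\rangle=-1$ this reads $Q=-\langle Q,\mathbf{r}_h\rangle\mathbf{r}_h+f v_h+g\mu_h$. Substituting into $B$ and simplifying the $(1+f^2)$-factors, the coefficients of $\mathbf{r}_h$, $v_h$, $\mu_h$ become, up to a common nonzero scalar, proportional to $fg\langle Q,\mathbf{r}_h\rangle$, $g(1+f^2)$ and $f(1+f^2+g^2)$. By linear independence of the frame, $B=0$ is equivalent to the simultaneous vanishing of all three; since $1+f^2>0$ and $1+f^2+g^2>0$, the last two already force $f=0$ and $g=0$, after which the first holds automatically. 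Hence $B(s_0)=0$ if and only if $\langle Q,v_h(s_0)\rangle=\langle Q,\mu_h(s_0)\rangle=0$.

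It remains to translate $f(s_0)=g(s_0)=0$ into the geometric condition $Q=\mathbf{r}_h(s_0)$, and this closing step is where I expect the only real subtlety. With $f=g=0$ the expansion gives $Q=-\langle Q,\mathbf{r}_h\rangle\mathbf{r}_h$, so $Q$ is a scalar multiple of $\mathbf{r}_h(s_0)$; imposing $\langle Q,Q\rangle=-1$ yields $\langle Q,\mathbf{r}_h\rangle=\pm 1$. Here I would invoke the standing hypothesis that $Q$ and $\mathbf{r}_h$ lie on the same sheet $\mathcal{H}^2_+$: for two such points one has $\langle Q,\mathbf{r}_h\rangle\le -1$, which rules out the value $+1$ and forces $\langle Q,\mathbf{r}_h\rangle=-1$, i.e. $Q=\mathbf{r}_h(s_0)$. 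Combining both directions gives the stated equivalence. The main obstacle is therefore not the differentiation but this final normalization argument, which is precisely the place where the requirement that $Q$ and $\mathbf{r}_h$ share the same part of hyperbolic space is indispensable.
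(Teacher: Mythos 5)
Your proposal is correct and follows essentially the same route as the paper: the paper likewise differentiates \eqref{eq3.2} via the Legendrian Frenet--Serret formulas \eqref{eq2.3}, arrives at exactly your factorization $\dot{\mathcal{P}}ed_Q(\mathbf{r}_h)=m_h\,B$ (its Eq.~\eqref{eq3.4}), and concludes by pseudo-orthonormality of $\{\mathbf{r}_h,v_h,\mu_h\}$. The only difference is that you spell out what the paper calls ``easy to see''---the coefficient analysis forcing $\langle Q,v_h(s_0)\rangle=\langle Q,\mu_h(s_0)\rangle=0$ and the normalization $\langle Q,\mathbf{r}_h(s_0)\rangle=-1$ via the same-sheet assumption---and both steps are carried out correctly.
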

\begin{proof}
	If we differentiate Eq. \eqref{eq3.2}, then using hyperbolic Legendrian Frenet-Serret type formulas we obtain
	\begin{align}\label{eq3.4}
		\nonumber	{\mathcal{P}}ed'_Q(\mathbf{r}_h)(s)=&-\dfrac{m_h(s)}{\sqrt{1+\langle Q, v_h(s)\rangle^2}}\bigg(\langle Q,\mu_h(s)\rangle\,v_h(s)+\langle Q,v_h(s)\rangle\,\mu_h(s)\bigg) \medskip\\
		&-m_h(s)\dfrac{\langle Q,v_h(s)\rangle\langle Q,\mu_h(s)\rangle}{(1+\langle Q, v_h(s)\rangle^2)^{3/2}}\bigg(-\langle Q,\mathbf{r}_h(s)\rangle\,\mathbf{r}_h(s)+\langle Q,\mu_h(s)\rangle\,\mu_h(s)\bigg).
	\end{align}
	Then it is easy to see that $s_0\in I$ is a singular point of $\mathcal{P}ed_Q(\mathbf{r}_h)$ if and only if $m_h(s_0)=0$ or $Q=\mathbf{r}_h(s_0)$ since $\{\mathbf{r}_h(s), v_h(s),\mu_h(s) \}$ is a pseudo orthonormal frame. 
\end{proof}

\begin{cor}
	Suppose that $(\mathbf{r}_h,v_h):I\to\Delta_1$ is a spacelike Legendre curve with the spacelike hyperbolic Legendre curvature $(\ell_h,m_h)$. If $s_0$ is a singular point of $v_h$, then the pedal curve $\mathcal{P}ed_Q(\mathbf{r}_h)$ is also singular at $s_0$.
\end{cor}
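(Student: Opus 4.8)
The plan is to reduce the corollary to Theorem~\ref{thm3.3} by translating the hypothesis ``$s_0$ is a singular point of $v_h$'' into the language of the curvature $m_h$. First I would read off from the hyperbolic Legendrian Frenet--Serret type formulas \eqref{eq2.3} that
\begin{equation*}
	\dot{v}_h(s)=m_h(s)\,\mu_h(s),
\end{equation*}
since the first two entries of the middle row of the coefficient matrix are zero. This is the only structural fact needed.

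Next I would observe that $\{\mathbf{r}_h(s),v_h(s),\mu_h(s)\}$ is a pseudo orthonormal frame, so $\mu_h(s)$ is a unit (spacelike) vector and in particular never vanishes. Consequently $\dot{v}_h(s_0)=0$ holds if and only if $m_h(s_0)=0$; that is, the statement that $s_0$ is a singular point of $v_h$ is precisely equivalent to the scalar condition $m_h(s_0)=0$.

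Finally I would invoke Theorem~\ref{thm3.3}, which asserts that $\mathcal{P}ed_Q(\mathbf{r}_h)$ is singular at $s_0$ whenever $m_h(s_0)=0$ \emph{or} $Q=\mathbf{r}_h(s_0)$. Since the hypothesis already supplies $m_h(s_0)=0$, the first alternative is met and the conclusion follows immediately, regardless of the location of the pedal point $Q$. I do not anticipate any genuine obstacle here: the corollary is an immediate specialization of the theorem, and the only point deserving a line of justification is that $\mu_h$ is a nonvanishing frame vector, which guarantees that vanishing of $\dot v_h$ is controlled entirely by the scalar $m_h$.
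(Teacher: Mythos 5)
Your proposal is correct and matches the paper's own argument: the paper likewise deduces $m_h(s_0)=0$ from the singularity of $v_h$ (via $\dot v_h=m_h\mu_h$) and then concludes from the expression \eqref{eq3.4} for $\dot{\mathcal{P}}ed_Q(\mathbf{r}_h)$, which is exactly the content of Theorem~\ref{thm3.3} that you invoke. Your explicit remark that $\mu_h$ is a unit frame vector, so that $\dot v_h(s_0)=0$ is genuinely equivalent to $m_h(s_0)=0$, is a justified detail the paper leaves implicit.
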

\begin{proof}
	Since $s_0$ is a singular point of $v_h$, we have $m_h(s_0)=0$. Then using \eqref{eq3.4} we conclude the proof.
\end{proof}

\begin{thm}\label{thm3.6}
	Let $(\mathbf{r}_h,v_h)$ be a spacelike Legendre curve with spacelike hyperbolic Legendre curvature $(\ell_h,m_h)$, and let $Q$ be a point in $\mathcal{H}^2-\mathbf{r}_h(I)$. Then the pedal curve $\mathcal{P}ed_Q(\mathbf{r}_h)$ of $\mathbf{r}_h$ with respect to $Q$ is a spacelike frontal, that is, $(\mathcal{P}ed_Q(\mathbf{r}_h),\breve{v}_h)$ is a spacelike Legendre curve with spacelike hyperbolic Legendre curvature $(\breve{\ell}_h,\breve{m}_h)$, where
	\begin{equation}\label{eq112}
		\begin{array}{ll}
			\breve{v}_h&=\dfrac{\langle Q,\mu_h\rangle^2\mathbf{r}_h+\langle Q,\mathbf{r}_h\rangle\langle Q,v_h\rangle v_h-\langle Q,\mathbf{r}_h\rangle\langle Q,\mu_h\rangle \mu_h }{\sqrt{\langle Q,\mu_h\rangle^2(1+\langle Q,v_h\rangle^2)+\langle Q,\mathbf{r}_h\rangle^2\langle Q,v_h\rangle^2 }},\medskip \\
			\breve{\mu}_h&=\dfrac{\langle Q,\mathbf{r}_h\rangle^2\langle Q,v_h\rangle \mu_h+\langle Q,\mu_h\rangle(\langle Q,\mathbf{r}_h\rangle^2-\langle Q,\mu_h\rangle^2)v_h-\langle Q,\mathbf{r}_h\rangle\langle Q,v_h\rangle\langle Q,\mu_h\rangle\mathbf{r}_h}{\sqrt{1+\langle Q,v_h\rangle^2}\sqrt{\langle Q,\mu_h\rangle^2(1+\langle Q,v_h\rangle^2)+\langle Q,\mathbf{r}_h\rangle^2\langle Q,v_h\rangle^2}} \medskip \\
			\breve{\ell}_h&=\dfrac{m_h}{1+\langle Q,v_h\rangle^2}\sqrt{\langle Q,\mu_h\rangle^2(1+\langle Q,v_h\rangle^2)+\langle Q,\mathbf{r}_h\rangle^2\langle Q,v_h\rangle^2 }, \medskip \\
			\breve{m}_h&=\langle {\breve{v}}'_h(s), \breve{\mu}_h(s)\rangle.
		\end{array}
	\end{equation} 
\end{thm}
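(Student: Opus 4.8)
The plan is to carry out the entire verification inside the moving frame $\{\mathbf{r}_h,v_h,\mu_h\}$ and to let the single constraint $Q\in\mathcal{H}^2$ do all the work. Since the frame is pseudo-orthonormal with $\langle\mathbf{r}_h,\mathbf{r}_h\rangle=-1$ and $\langle v_h,v_h\rangle=\langle\mu_h,\mu_h\rangle=1$, I would first write $Q=-a\,\mathbf{r}_h+b\,v_h+c\,\mu_h$ with $a=\langle Q,\mathbf{r}_h\rangle$, $b=\langle Q,v_h\rangle$, $c=\langle Q,\mu_h\rangle$, and record the governing identity $\langle Q,Q\rangle=-1$, i.e.\ $a^2=1+b^2+c^2$. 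Substituting this expansion into \eqref{eq3.2} makes the $v_h$-component cancel, giving the compact form $\mathcal{P}ed_Q(\mathbf{r}_h)=(1+b^2)^{-1/2}(-a\,\mathbf{r}_h+c\,\mu_h)$; then $\langle\mathcal{P}ed_Q(\mathbf{r}_h),\mathcal{P}ed_Q(\mathbf{r}_h)\rangle=(c^2-a^2)/(1+b^2)=-1$ after using $a^2-c^2=1+b^2$, so $\mathcal{P}ed_Q(\mathbf{r}_h)\in\mathcal{H}^2$.

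Next I would check the three defining conditions of a spacelike Legendre curve for $(\mathcal{P}ed_Q(\mathbf{r}_h),\breve{v}_h)$ directly in the frame. Writing $D=\sqrt{c^2(1+b^2)+a^2b^2}$ for the common denominator, the numerator of $\breve{v}_h$ is $c^2\mathbf{r}_h+ab\,v_h-ac\,\mu_h$; expanding $\langle\breve{v}_h,\breve{v}_h\rangle$ and replacing $a^2$ by $1+b^2+c^2$ collapses the numerator norm to $D^2$, so $\breve{v}_h\in d\mathcal{S}^2$. The same cancellation gives $\langle\mathcal{P}ed_Q(\mathbf{r}_h),\breve{v}_h\rangle=0$, placing $(\mathcal{P}ed_Q(\mathbf{r}_h),\breve{v}_h)$ in $\Delta_1$. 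For the contact (frontal) condition I would use \eqref{eq3.4}, which in the frame reads $\dot{\mathcal{P}}ed_Q(\mathbf{r}_h)=-m_h(1+b^2)^{-3/2}\big(-abc\,\mathbf{r}_h+(1+b^2)c\,v_h+a^2b\,\mu_h\big)$, and pair it with $\breve{v}_h$; the result is proportional to $c^2+1+b^2-a^2=0$, giving $\langle\dot{\mathcal{P}}ed_Q(\mathbf{r}_h),\breve{v}_h\rangle=0$. I would also note here that $D$ vanishes exactly when $c=0$ and $ab=0$, which by the constraint forces $b=c=0$, $a^2=1$, i.e.\ $Q=\mathbf{r}_h(s)$; thus the hypothesis $Q\in\mathcal{H}^2-\mathbf{r}_h(I)$ is precisely what keeps $\breve{v}_h$ and $\breve{\mu}_h$ smooth on all of $I$, including at the singular points $m_h=0$ of the pedal curve.

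For the frame and curvature I would differentiate the coefficient functions using \eqref{eq2.3}, obtaining the clean relations $\dot a=\ell_h c$, $\dot b=m_h c$, $\dot c=\ell_h a-m_h b$; these are what produce the compact $\dot{\mathcal{P}}ed_Q(\mathbf{r}_h)$ above and reconcile it with \eqref{eq3.4}. Then $\breve{\mu}_h=\mathcal{P}ed_Q(\mathbf{r}_h)\wedge\breve{v}_h$ is computed from the basis products $\mathbf{r}_h\wedge v_h=\mu_h$, $v_h\wedge\mu_h=-\mathbf{r}_h$, $\mu_h\wedge\mathbf{r}_h=v_h$; after substituting $c^2-a^2=-(1+b^2)$ this yields an explicit $\breve{\mu}_h$, a unit spacelike vector completing the pseudo-orthonormal frame $\{\mathcal{P}ed_Q(\mathbf{r}_h),\breve{v}_h,\breve{\mu}_h\}$. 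Finally, since $\langle\dot{\mathcal{P}}ed_Q(\mathbf{r}_h),\mathcal{P}ed_Q(\mathbf{r}_h)\rangle=0$ and $\langle\dot{\mathcal{P}}ed_Q(\mathbf{r}_h),\breve{v}_h\rangle=0$, the tangent is purely along $\breve{\mu}_h$, so $\dot{\mathcal{P}}ed_Q(\mathbf{r}_h)=\breve{\ell}_h\,\breve{\mu}_h$ with $\breve{\ell}_h=\langle\dot{\mathcal{P}}ed_Q(\mathbf{r}_h),\breve{\mu}_h\rangle$; computing this inner product and substituting $a^2=1+b^2+c^2$ collapses it to $\tfrac{m_h}{1+b^2}\sqrt{c^2(1+b^2)+a^2b^2}$, the claimed $\breve{\ell}_h$. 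In particular $\dot{\mathcal{P}}ed_Q(\mathbf{r}_h)$ is spacelike wherever it is nonzero, confirming that $\mathcal{P}ed_Q(\mathbf{r}_h)$ is a spacelike frontal, and $\breve{m}_h=\langle\dot{\breve{v}}_h,\breve{\mu}_h\rangle$ is taken by definition.

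The routine but heaviest part is the bookkeeping in the cross product for $\breve{\mu}_h$ and in the inner product defining $\breve{\ell}_h$, where many mixed terms appear before $a^2=1+b^2+c^2$ eliminates them; I expect the main care to be needed there, especially in tracking signs and orientation. As an independent safeguard I would use $\|\dot{\mathcal{P}}ed_Q(\mathbf{r}_h)\|=|\breve{\ell}_h|$: computing $\langle\dot{\mathcal{P}}ed_Q(\mathbf{r}_h),\dot{\mathcal{P}}ed_Q(\mathbf{r}_h)\rangle$ should reduce to $m_h^2D^2/(1+b^2)^2$, which both validates the closed form of $\breve{\ell}_h$ and confirms that the explicit $\breve{\mu}_h$ obtained from the cross product is a genuine unit spacelike vector with the correct orientation.
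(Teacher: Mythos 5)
Your proposal is correct, and at bottom it follows the same route as the paper: verify $\langle\mathcal{P}ed_Q(\mathbf{r}_h),\breve{v}_h\rangle=0$ and $\langle\dot{\mathcal{P}}ed_Q(\mathbf{r}_h),\breve{v}_h\rangle=0$, produce $\breve{\mu}_h$ via the wedge product, and read off $\breve{\ell}_h$ from the tangent direction. The genuine difference is organizational but substantive: the paper substitutes the explicit formulas \eqref{eq3.2} and \eqref{eq3.4} and cancels term by term, whereas you first reduce everything to the frame coordinates $a=\langle Q,\mathbf{r}_h\rangle$, $b=\langle Q,v_h\rangle$, $c=\langle Q,\mu_h\rangle$, governed by the single constraint $a^2=1+b^2+c^2$ and the coefficient derivatives $\dot a=\ell_h c$, $\dot b=m_h c$, $\dot c=\ell_h a-m_h b$ (all of which I have checked against \eqref{eq2.3} and \eqref{eq3.4}; your compact form of $\dot{\mathcal{P}}ed_Q(\mathbf{r}_h)$ agrees with the paper's). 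This reorganization buys three things the paper omits: (i) the check that $\breve{v}_h$ is actually unit spacelike ($\langle\breve{v}_h,\breve{v}_h\rangle=1$ collapses via $a^2-c^2=1+b^2$), without which $(\mathcal{P}ed_Q(\mathbf{r}_h),\breve{v}_h)$ is not known to land in $\mathcal{H}^2\times d\mathcal{S}^2$; (ii) the identification of the degeneracy locus $D=0$ of the common denominator with $Q=\pm\mathbf{r}_h(s)$ (the antipode excluded by the standing assumption that both lie in $\mathcal{H}^2_+$), which is the only place the hypothesis $Q\in\mathcal{H}^2-\mathbf{r}_h(I)$ is used and which the paper never explains; (iii) the cross-check $\langle\dot{\mathcal{P}}ed_Q(\mathbf{r}_h),\dot{\mathcal{P}}ed_Q(\mathbf{r}_h)\rangle=m_h^2D^2/(1+b^2)^2$, which indeed holds.

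One further payoff of your route: carrying out the wedge product with your (correct) basis rules gives $\mathcal{P}ed_Q(\mathbf{r}_h)\wedge\breve{v}_h$ with numerator $abc\,\mathbf{r}_h-c(1+b^2)\,v_h-a^2b\,\mu_h$, which exposes a misprint in \eqref{eq112}: the middle coefficient of the printed $\breve{\mu}_h$ should be $\langle Q,\mu_h\rangle\big(\langle Q,\mathbf{r}_h\rangle^2-\langle Q,\mu_h\rangle^2\big)=\langle Q,\mu_h\rangle\big(1+\langle Q,v_h\rangle^2\big)$, not $\langle Q,\mu_h\rangle\big(\langle Q,\mathbf{r}_h\rangle^2+\langle Q,\mu_h\rangle^2\big)$; as printed one finds $\langle\breve{\mu}_h,\breve{v}_h\rangle=2abc^3/\big(\sqrt{1+b^2}\,D^2\big)\neq0$ generically, so the printed triple is not a pseudo-orthonormal frame. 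Moreover, with the orientation $\breve{\mu}_h=\mathcal{P}ed_Q(\mathbf{r}_h)\wedge\breve{v}_h$ your computation yields exactly $\dot{\mathcal{P}}ed_Q(\mathbf{r}_h)=\breve{\ell}_h\,\breve{\mu}_h$ with the stated $\breve{\ell}_h=m_hD/(1+b^2)$, whereas the sign-corrected printed $\breve{\mu}_h$ is $-\mathcal{P}ed_Q(\mathbf{r}_h)\wedge\breve{v}_h$ and would flip the sign of $\breve{\ell}_h$. You also implicitly correct a sign slip in the paper's own proof, whose prose expansion $Q-\langle Q,v_h\rangle v_h=\langle Q,\mathbf{r}_h\rangle\mathbf{r}_h+\langle Q,\mu_h\rangle\mu_h$ has the wrong sign on the timelike component; your $Q=-a\,\mathbf{r}_h+b\,v_h+c\,\mu_h$ is the correct Minkowski expansion and is the one consistent with \eqref{eq3.4}.
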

\begin{proof}
	It is enough to show that $(\mathcal{P}ed_Q(\mathbf{r}_h),\breve{v}_h)$ satisfies the conditions for being a spacelike Legendre curve.
	
	The first condition is $ \langle\mathcal{P}ed_Q(\mathbf{r}_h),\breve{v}_h\rangle=0$. By using Eq. \eqref{eq3.2} and $\breve{v}_h$ defined in \eqref{eq112}, we get
	{\small\begin{align*}
			\langle\mathcal{P}ed_Q(\mathbf{r}_h),\breve{v}_h\rangle&= \dfrac{\langle Q,\mu_h\rangle^2\langle Q,\mathbf{r}_h\rangle+\langle Q,\mathbf{r}_h\rangle\langle Q,v_h\rangle^2-\langle Q,\mathbf{r}_h\rangle\langle Q,\mu_h\rangle^2-\langle Q,\mathbf{r}_h\rangle\langle Q,v_h\rangle^2}{\sqrt{1+\langle Q, v_h(s)\rangle^2}\sqrt{\langle Q,\mu_h\rangle^2(1+\langle Q,v_h\rangle^2)+\langle Q,\mathbf{r}_h\rangle^2\langle Q,v_h\rangle^2 }} \\&=0.
	\end{align*}}
	The second condition is $\langle {\mathcal{P}}'ed_Q(\mathbf{r}_h),\breve{v}_h\rangle=0$. Using Eq. \eqref{eq3.4} and $\breve{v}_h$ defined in \eqref{eq112} yields
	{\small \begin{align*}
			\langle {\mathcal{P}}ed'_Q(\mathbf{r}_h),\breve{v}_h\rangle=&A(s) \bigg( \dfrac{1}{1+\langle Q, v_h(s)\rangle^2}(-\langle Q, \mu_h(s)\rangle^3\langle Q, \mathbf{r}_h(s)\rangle\langle Q, v_h(s)\rangle\\
			&+\langle Q, \mathbf{r}_h(s)\rangle\langle Q, \mu_h(s)\rangle^3\langle Q, v_h(s)\rangle)\\
			&-\langle Q, \mu_h(s)\rangle\langle Q, \mathbf{r}_h(s)\rangle\langle Q, v_h(s)\rangle+\langle Q, \mathbf{r}_h(s)\rangle\langle Q, \mu_h(s)\rangle\langle Q, v_h(s)\rangle \bigg)\\
			=&0,
	\end{align*}}
	where 
	\[A(s)=\dfrac{m_h(s)}{\sqrt{1+\langle Q, v_h(s)\rangle^2}\sqrt{\langle Q,\mu_h\rangle^2(1+\langle Q,v_h\rangle^2)+\langle Q,\mathbf{r}_h\rangle^2\langle Q,v_h\rangle^2 }}.\]
	Then these two conditions show that $(\mathcal{P}ed_Q(\mathbf{r}_h),\breve{v}_h)$ is a spacelike Legendre curve. Since the set $\{\mathbf{r}_h,v_h,\mu_h\}$ constructs a pseudo orthonormal basis we can write $Q-\langle Q,v_h\rangle v_h=-\langle Q,\mathbf{r}_h\rangle \mathbf{r}_h+\langle Q,\mu_h\rangle \mu_h$. By considering this fact one can calculate the Lorentzian vector product $\mathcal{P}ed_Q(\mathbf{r}_h)\wedge \breve{v}_h$ which gives $\breve{\mu}_h$. Using these relations, one can obtain the Legendre curvatures of $\mathcal{P}ed_Q(\mathbf{r}_h)$.
\end{proof}

\begin{ex}\label{ex3.7}
	For the hyperbolic astroid given by \\$\mathbf{r}_h(s)=(\sqrt{1+\cos^6s+\sin^6s},\cos^3s,\sin^3s)$, we get
	\[{\mathbf{r}}'_h(s)=(\dfrac{-3\cos^5s\sin s+3\sin^5s\cos s}{\sqrt{1+\cos^6s+\sin^6s}}, -3\cos^2s\sin s, 3\sin^2s\cos s). \]
	Take $v_h:[0,2\pi)\to d\mathcal{S}^2$ 
	{\small\[v_h(s)=\dfrac{1}{\sqrt{1+\sin^2s\cos^2s}}\bigg(\sin s\cos s\sqrt{1+\cos^6s+\sin^6s},\sin s(1+\cos^4s),\cos s(1+\sin^4s)\bigg). \]}
	Then $(\mathbf{r}_h,v_h):[0,2\pi)\to \mathcal{H}^2\times d\mathcal{S}^2$ is a spacelike Legendre immersion. Moreover by the Lorentzian vector product $\mathbf{r}_h\wedge v_h$ we immediately get
	\[\mu_h(s)=\dfrac{\sqrt{1+\cos^6s+\sin^6s}}{\sqrt{1+\sin^2s\cos^2s}}\bigg(\dfrac{\cos^4s-\sin^4 s}{\sqrt{1+\cos^6s+\sin^6s}},\cos s,-\sin s\bigg). \]
	Choose the pedal point $Q_1=(1,0,0)$. Then the hyperbolic pedal of the spacelike front $\mathbf{r}_h$ with respect to $Q_1$ is obtained as (See Fig. \ref{fig1}):
	\begin{align*}
		\mathcal{P}ed_{Q_1}(\mathbf{r}_h)=&\dfrac{1}{\sqrt{1+\cos^2s \sin^2 s}\sqrt{1+\cos^2s \sin^2s \left(2+\cos^6s+\sin^6s\right)}}\\
		&\times\bigg(1+\cos^2s \sin^2s \left(2+\cos^6s+\sin^6s\right),\\
		&\cos s
		\left(1+\cos^4s\right) \sin^2s \sqrt{1+\cos^6s+\sin^6s},\\
		&
		\cos^2s \sin s \left(1+\sin^4s\right) \sqrt{1+\cos^6s+\sin^6s}\bigg).
	\end{align*}
	\begin{figure}[H]
		\centering
		\includegraphics[width=0.6\textwidth]{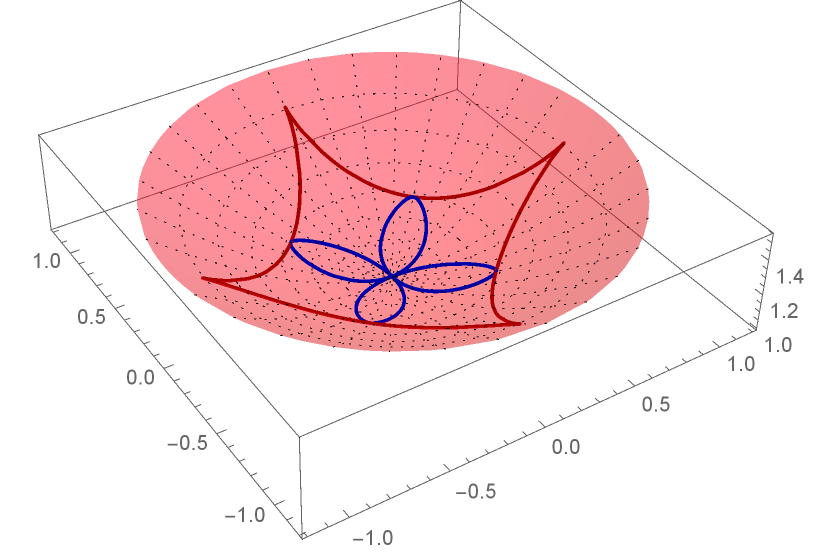}
		\caption{The hyperbolic astroid (red) and its hyperbolic pedal curve (blue) with respect to $Q_1=(1,0,0)$.} \label{fig1}
	\end{figure}
	Now we choose the pedal point $Q_2=\dfrac{1}{2}(\sqrt{5},\dfrac{1}{\sqrt{2}},\dfrac{1}{\sqrt{2}})=\mathbf{r}_h(\pi/4)$. Then the hyperbolic pedal of the spacelike front $\mathbf{r}_h$ with respect to $Q_2$ can be obtained similarly. (See Fig. \ref{fig3}).
	\begin{figure}[H]
		\centering
		\includegraphics[width=0.6\textwidth]{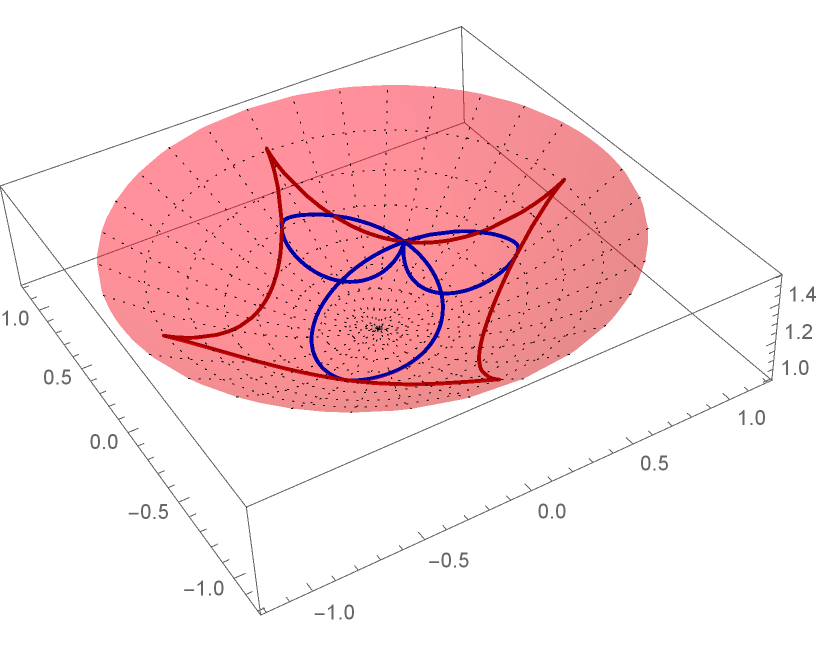}
		\caption{The hyperbolic astroid (red) and its hyperbolic pedal curve (blue) with respect to $Q_2=\dfrac{1}{2}(\sqrt{5},\dfrac{1}{\sqrt{2}},\dfrac{1}{\sqrt{2}})$.} \label{fig3}
	\end{figure}
\end{ex}
\section{Classification of singularities of hyperbolic pedal curves}
Now we give the complete classification of singularities of hyperbolic pedal curves. We present singularity types of pedal curves for non-singular and singular dual curve germs. For the rest of this paper, we shall assume $Q,\mathbf{r}_h\in \mathcal{H}^2_+$, where $\mathcal{H}^2_+$ represents the upper hyperbolic $2$-space.

Let $Q$ be a point of $\mathcal{H}^2_+$. Consider the $C^\infty$ map 
\begin{align*}
	\psi_Q: d\mathcal{S}^2&\to \mathcal{H}^2_+ \\
	x&\mapsto \psi_Q(x)=\dfrac{1}{\sqrt{1+\langle Q,x\rangle^2}}(Q-\langle Q,x\rangle x).
\end{align*}
Define $\tilde{H}_Q=\{y\in \mathcal{H}^2_+\,\vert\,\langle Q,y\rangle <0 \}$. Since $\langle\psi_Q(x),Q\rangle <0$ for all $x\in d\mathcal{S}^2$ we have $X_Q:=\psi_Q(d\mathcal{S}^2)\subset\tilde{H}_Q$. Let $d\mathbb{S}^2:=d\mathcal{S}^2/\{\pm Id\}$ be the model space (or projective quotient) of $d\mathcal{S}^2$. By construction, $d\mathbb{S}^2$ is a subset of the projective space $\mathbb{R}P^2$. Indeed, topologically $d\mathbb{S}^2$ can be defined by
\begin{equation*}
	d\mathbb{S}^2=P\{ x\in\mathbb{R}^3_1\,\vert\,\langle x,x\rangle > 0 \}.
\end{equation*}
Consider the canonical projection $f:d\mathcal{S}^2\to d\mathbb{S}^2$.  Since $\psi_Q(x)=\psi_Q(-x)$, the map $\psi_Q$ induces $\tilde{\psi}_Q:d\mathbb{S}^2\to X_Q$. Thus it follows from \eqref{eq3.2} that 
\begin{equation*}
	\mathcal{P}ed_Q(\mathbf{r}_h)(s) =\tilde{\psi}_Q\circ f \circ v_h(s).
\end{equation*}
Suppose that $b:B\to\mathbb{R}^2_1$ is the blow up of $\mathbb{R}^2_1$ centered at the origin, where $B=\{(x_1,x_2)\times[y_1:y_2]\in\mathbb{R}^2_1\times\mathbb{R}P^1\,\vert\,x_1y_2=x_2y_1\}$. We give the following lemma.
\begin{lem}\label{lemma4.1}
	Let $Q\in\mathcal{H}^2_+$. Then there exist $C^\infty$ diffeomorphisms $h_s:d\mathbb{S}^2\to B$ and $h_t:X_Q\to\mathbb{R}^2_1$ such that $h_t\circ \tilde{\psi}_Q\equiv b\circ h_s$.
\end{lem}
\begin{proof}
	Without loss of generality assume that $Q=(1,0,0)$. For
	\begin{align*}
		U_1=\{ (x_1,x_2)\times[y_1:y_2]\in \mathbb{R}^2_1\times \mathbb{R}P^1\,\vert\,x_1y_2=x_2y_1,\quad x_1\neq 0 \}, \\
		U_2=\{ (x_1,x_2)\times[y_1:y_2]\in \mathbb{R}^2_1\times \mathbb{R}P^1\,\vert\,x_1y_2=x_2y_1,\quad x_2\neq 0 \},
	\end{align*}
	and 
	\begin{align*}
		\varphi_1:U_1\to\mathbb{R}^2_1;\quad (x_1,x_2)\times[y_1:y_2]\mapsto(u_1,u_2)=(x_1,\dfrac{y_2}{y_1}), \\
		\varphi_2:U_2\to\mathbb{R}^2_1;\quad (x_1,x_2)\times[y_1:y_2]\mapsto(u_1',u_2')=(\dfrac{y_1}{y_2},x_2),
	\end{align*}
	it is well-known that the set $\{ (U_1,\varphi_1),(U_2,\varphi_2) \}$ is the standard atlas for $B$ and we have 
	\begin{align*}
		b\circ \varphi_1^{-1}(u_1,u_2)=(u_1,u_1u_2),\\
		b\circ \varphi_2^{-1}(u_1,u_2)=(u_1u_2,u_2).
	\end{align*}
	Define the sets 
	$U_{Q,1}=\{ f(x_1,x_2,x_3)\,\vert\,x_2\neq 0 \}$, 
	$U_{Q,2}=\{ f(x_1,x_2,x_3)\,\vert\,x_3\neq 0 \},$
	and the maps
	\begin{align*}
		\varphi_{Q,1}:U_{Q,1}\to\mathbb{R}^2_1;\quad\varphi_{Q,1}(f(x_1,x_2,x_3))=(\tanh(\lambda)x_2,\dfrac{x_3}{x_2}), \\
		\varphi_{Q,2}:U_{Q,2}\to\mathbb{R}^2_1;\quad\varphi_{Q,2}(f(x_1,x_2,x_3))=(\dfrac{x_2}{x_3},\tanh(\lambda)x_3),
	\end{align*}
	where $\lambda=\sinh^{-1}(x_1)\in\mathbb{R}$. Then the following equality holds.
	\begin{equation*}
		\varphi_{Q,j}\circ\varphi_{Q,i}^{-1} \equiv \varphi_j\circ\varphi_i^{-1},\quad i,j\in\{1,2\}.
	\end{equation*}
	Therefore the set $\{(U_{Q,1},\varphi_{Q,1}),(U_{Q,2},\varphi_{Q,2})\}$ is an atlas for $d\mathbb{S}^2$.
	
	Now we express the map $\tilde{\psi}_Q$ by the coordinates $(u_1,u_2,u_3)$. From the assumption $Q=(1,0,0)$, for $x=(\sinh\lambda,x_2,x_3)$ we have
	$\psi_Q(x)=(\cosh\lambda, x_2 \tanh\lambda, x_3 \tanh\lambda).$
	Then we get 
	\begin{align}\label{eq4.1}
		q\circ \tilde{\psi}_Q\circ \varphi_{Q,1}^{-1}(u_1,u_2)=(u_1,u_2u_1),\\
		q\circ \tilde{\psi}_Q\circ \varphi_{Q,2}^{-1}(u_1,u_2)=(u_1u_2,u_2), \nonumber
	\end{align}
	where $q:\mathbb{R}^3_1\to\mathbb{R}^2_1;$ $(x,y,z)\mapsto (y,z)$ is the canonical projection. The restriction $q\vert_{d\mathbb{S}^2}:d\mathbb{S}^2\to q(d\mathbb{S}^2)$ is a $C^\infty$-diffeomorphism. Then Lemma \ref{lemma4.1} is proved for $\tilde{\psi}_Q\vert_{U_{Q,i}}$ and $b\vert_{U_i}$. To complete the proof it is sufficient to show that the equality
	\begin{equation*}
		\varphi_i^{-1}\circ\varphi_{Q,i}(f(x_1,x_2,x_3))=\varphi_j^{-1}\circ\varphi_{Q,j}(f(x_1,x_2,x_3)),\quad i,j\in\{1,2\}
	\end{equation*}
	holds for $f(x_1,x_2,x_3)\in U_{Q,i}\cap U_{Q,j}$. This equality is satisfied because the patching relations for $\{(U_{Q,i},\varphi_{Q,i})\}$ are the same as those for the standard atlas of $B$.
\end{proof}
\begin{rem}
	From Lemma \ref{lemma4.1}, the map $\tilde{\psi}_Q$ is a map of blow up type.
\end{rem}
Now using Lemma \ref{lemma4.1} we obtain concrete normal forms for generic singularities and exact locations of the pedal point $Q$ for such singularities. First we recall the following definitions and lemmas.
\begin{defn}
	Let $f,g:(I,s_0)\to\mathbb{R}^n$ be two curve germs. Then $f$ and $g$ are said to be $C^r$ $\mathcal{L}$-equivalent if there exists a $C^r$ diffeomorphism germ $\psi:(\mathbb{R}^n,f(s_0))\to(\mathbb{R}^n,g(s_0))$ such that $g=\psi\circ f$. Moreover, these curve germs are said to be $C^r$ $\mathcal{A}$-equivalent provided that there exist two $C^r$ diffeomorphism germs $\phi:(I,s_0)\to(I,s_0)$ and $\psi:(\mathbb{R}^n,f(s_0))\to(\mathbb{R}^n,g(s_0))$ such that $g\circ \varphi=\psi\circ f$. 
\end{defn}
\begin{defn}[{\cite{brugib}}]
	A function $f:I\to\mathbb{R}$ is said to have $A_k$-type singularity ($k\geq0$) at $s_0\in I$ if $f(s_0)=f'(s_0)=\cdots=f^{(k)}(s_0)=0$ and $f^{(k+1)}(s_0)\neq 0$. 
\end{defn}
\begin{lem}[{\cite[Theorem 3.3]{brugib}}]\label{lem4.3}
	Suppose that $g:(\mathbb{R},0)\to \mathbb{R}$ is a $C^\infty$ function-germ. Suppose further that $g$ has $A_{k}$-type singularity at $0$. In this case, there exists a $C^\infty$ diffeomorphism germ $f:(\mathbb{R},0)\to(\mathbb{R},0)$ such that $g(f(s))=\pm s^k$, where we have $+$ or $-$ depending on the sign of $g^{(k+1)}(0)$.
\end{lem}
\begin{lem}[{\cite[Hadamard's Lemma]{brugib}}]\label{lem4.4}
	Let $f:(\mathbb{R},0)\to \mathbb{R}$ be smooth, and suppose $f^{(p)}(0)=0$ for all $p$ with $1\leq p\leq k$. Then there is a smooth function $f_1:(\mathbb{R},0)\to\mathbb{R}$ such that $f(s)=f(0)+s^{k+1}f_1(s)$ for all $s$ in some neighbourhood of $0$. Moreover, if $f^{(k+1)}(0)\neq 0$ then $f_1(0)\neq 0$.
\end{lem}
\begin{thm}\label{thm4.2}
	Consider a spacelike Legendre curve  $(\mathbf{r}_h,v_h)$ with a spacelike hyperbolic Legendre curvature $(\ell_h,m_h)$. Let $s_0\in I$ such that $m_h(s_0)\neq0$, and let $Q$ be a point in $\mathcal{H}^2_+$. Suppose that $\ell_h$ has an $A_{k-1}$-type singularity at $s_0\in I$. Then the following statements are satisfied:
	\begin{enumerate}
		\item Assume that $Q\in \mathcal{H}^2_+-\{ \mathbf{r}_h(s_0) \}$. The map-germ $\mathcal{P}ed_Q(\mathbf{r}_h):(I,s_0)\to \big(\mathcal{H}^2_+,\mathcal{P}ed_Q(\mathbf{r_h})(s_0)\big)$ is smooth, which means that it is $C^\infty$ $\mathcal{A}$-equivalent to the map-germ $(\mathbb{R},0)\to(\mathbb{R}^2,0)$ defined by $t\mapsto (t,0)$.
		\item Assume that $Q=\mathbf{r}_h(s_0)$. Then the map-germ $\mathcal{P}ed_Q(\mathbf{r_h}):(I,s_0)\to \big(\mathcal{H}^2_+,\mathcal{P}ed_Q(\mathbf{r_h})(s_0)\big)$ is $C^1$ $\mathcal{A}$-equivalent to the map-germ $(\mathbb{R},0)\to(\mathbb{R}^2,0)$ defined by $t\mapsto(t^{k+2},t^{k+3})$.
	\end{enumerate} 
\end{thm}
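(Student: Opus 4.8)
The plan is to dispatch the two cases by quite different routes. Case (1) should fall out of Theorem~\ref{thm3.3} with essentially no extra work: since $m_h(s_0)\neq0$ and $Q\neq\mathbf{r}_h(s_0)$, that theorem guarantees $\dot{\mathcal{P}}ed_Q(\mathbf{r}_h)(s_0)\neq0$, so the germ $\mathcal{P}ed_Q(\mathbf{r}_h)$ is an immersion at $s_0$; by the inverse function (constant rank) theorem an immersed curve germ into a surface is $C^\infty$ $\mathcal{A}$-equivalent to $t\mapsto(t,0)$, which is precisely the claim. The $A_{k-1}$ hypothesis plays no role here. All the substance is in case (2).

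For case (2) I would first normalize. Using an isometry of $\mathcal{H}^2$ fixing $Q$ (such isometries act equivariantly on $\mathbf{r}_h,v_h,\mu_h$, hence on \eqref{eq3.2}) together with a reparametrization, I would arrange $s_0=0$, $Q=\mathbf{r}_h(0)=(1,0,0)$ and $v_h(0)=(0,1,0)$, which forces $\mu_h(0)=(0,0,\pm1)$. With $Q=(1,0,0)$ the factorization $\mathcal{P}ed_Q(\mathbf{r}_h)=\tilde{\psi}_Q\circ f\circ v_h$ together with $h_t\circ\tilde{\psi}_Q\equiv b\circ h_s$ from Lemma~\ref{lemma4.1} and the chart formulas \eqref{eq4.1} give, after composing with the diffeomorphism $h_t=q|_{X_Q}$ of the target, the explicit coordinate expression $h_t\circ\mathcal{P}ed_Q(\mathbf{r}_h)(s)=(u_1(s),u_1(s)u_2(s))$, where $u_1=v_{h,1}v_{h,2}/\sqrt{1+v_{h,1}^2}$ and $u_2=v_{h,3}/v_{h,2}$ with $v_h=(v_{h,1},v_{h,2},v_{h,3})$. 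Thus, up to $C^\infty$ diffeomorphisms on source and target, the pedal germ \emph{is} the blow-down $b$ of the plane curve germ $s\mapsto(u_1(s),u_2(s))$ in $B$, and the whole problem reduces to the orders of vanishing of $u_1$ and $u_2$ at $0$.

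To pin down those orders I would set $\alpha=\langle Q,\mathbf{r}_h\rangle$, $\beta=\langle Q,v_h\rangle$, $\gamma=\langle Q,\mu_h\rangle$ and differentiate by \eqref{eq2.3}, obtaining the triangular system $\dot{\alpha}=\ell_h\gamma$, $\dot{\beta}=m_h\gamma$, $\dot{\gamma}=\ell_h\alpha-m_h\beta$ with $\alpha(0)=-1$, $\beta(0)=\gamma(0)=0$. Since $m_h(0)\neq0$ the relation $\dot{\beta}=m_h\gamma$ gives $\operatorname{ord}_0\beta=\operatorname{ord}_0\gamma+1$, and because $\ell_h$ vanishes to order exactly $k$ (this is what $A_{k-1}$-type means), comparing orders in $\dot{\gamma}=\ell_h\alpha-m_h\beta$ forces $\operatorname{ord}_0\gamma=k+1$; the two competing balances are discarded because they return self-contradictory orders for $\gamma$. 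Hence $\operatorname{ord}_0\beta=k+2$. As $v_{h,1}=-\beta$ and $v_{h,2}(0)=1$, this yields $\operatorname{ord}_0 u_1=k+2$, while $\dot{v}_{h,3}(0)=m_h(0)\mu_{h,3}(0)=\pm m_h(0)\neq0$ gives $\operatorname{ord}_0 u_2=1$. Consequently $s\mapsto(u_1,u_2)$ is an immersion germ into $B$ whose image is tangent to the exceptional divisor $\{u_1=0\}$ to order $k+2$. (As a sanity check, the degenerate value $k=0$, i.e. $\ell_h(0)\neq0$, reproduces the ordinary cusp $(t^2,t^3)$.)

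Finally, reparametrizing by $t=u_2(s)$ (legitimate as $\dot{u}_2(0)\neq0$) turns the curve in $B$ into the graph $(U_1(t),t)$ with $U_1(t)=t^{k+2}\eta(t)$, $\eta(0)\neq0$ by Lemma~\ref{lem4.4}, whose blow-down is $(t^{k+2}\eta(t),\,t^{k+3}\eta(t))$. To reach the model $(t^{k+2},t^{k+3})$ I would invoke Lemma~\ref{lem4.3} to straighten one coordinate to a pure power and then construct a target diffeomorphism removing the residual unit factor from the other. I expect \emph{this last step to be the main obstacle}, and it is exactly where the regularity drops: recovering the parameter from the blown-down image forces the normalizing target map to involve a $(k+2)$-th root of the first coordinate, so one must verify that although it cannot be made $C^2$ it can nonetheless be arranged as a genuine $C^1$ diffeomorphism germ. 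This is precisely the blow-up phenomenon noted in the remark after Lemma~\ref{lemma4.1}, and carrying out and checking this $C^1$ normalization — rather than the routine order bookkeeping — is the real content of case (2).
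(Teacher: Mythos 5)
Your proposal is correct and follows essentially the same route as the paper's proof: case (1) directly from Theorem \ref{thm3.3}, and case (2) via the normalization $Q=\mathbf{r}_h(s_0)=(1,0,0)$, $v_h(s_0)=(0,1,0)$, the blow-up structure of Lemma \ref{lemma4.1} in the chart $\varphi_{Q,1}$, the vanishing orders $k+2$, $0$, $1$ for the components of $v_h$ (which you derive through the structure-equation system for $\langle Q,\mathbf{r}_h\rangle$, $\langle Q,v_h\rangle$, $\langle Q,\mu_h\rangle$ rather than the paper's direct Taylor expansion of $v_h$ --- an equivalent computation), followed by Lemma \ref{lem4.3}, Hadamard's Lemma \ref{lem4.4}, and a target map involving a $(k+2)$-th (or $(k+3)$-th) root that is only $C^1$. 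The one step you flag as an obligation rather than complete --- verifying that this root-composed map is a genuine $C^1$ diffeomorphism germ, including the parity dichotomy between $k+2$ odd and $k+3$ odd --- is precisely what the paper's proof carries out explicitly, so your plan reproduces its content.
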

\begin{proof}
	\begin{enumerate}
		\item Since $Q\in \mathcal{H}^2_+-\{ \mathbf{r}_h(s_0) \}$ and $m_h(s_0)\neq0$, we have ${\mathcal{P}}ed'_Q(\mathbf{r}_h)(s_0)\neq 0$ from Theorem \ref{thm3.3}. Therefore the map-germ ${\mathcal{P}}ed_Q(\mathbf{r}_h)(s_0)$ is non-singular.
		\item By an appropriate rotation of $\mathcal{H}^2_+$, we may assume that $Q=(1,0,0)\in\mathcal{H}^2_+$ and $\mathbf{r}_h(s_0)=( 1,0,0)$, $v_h(s_0)=(0,1,0)$ and $\mu_h(s_0)=(0,0, 1)$. Then it is easy to see that the component function-germs $v_1,v_2$, and $v_3$ of the map-germ $v_h=(v_1,v_2,v_3):(I,s_0)\to d\mathcal{S}^2$ admit the lowest degree of non-zero terms as $k+2,0$ and $1$, respectively.
		
		Then using the hyperbolic Legendrian Frenet-Serret type formulas we may take the map-germ $v_h:(I,s_0)\to(d\mathcal{S}^2,v_h(s_0))$ 
		\[ v_h(s)=\begin{pmatrix}
			\dfrac{1}{(k+2)!}\ell_h^{(k)}(s_0)m_h(s_0)(s-s_0)^{k+2}+C(s-s_0) \\
			1+A(s-s_0)\\
			m_h(s_0)(s-s_0)+B(s-s_0)
		\end{pmatrix}, \]
		where $A,B$, and $C$ are some $C^\infty$ function-germs $(\mathbb{R},0)\to(\mathbb{R},0)$ such that $\dfrac{d^p A}{dt^p}(0)=\dfrac{d^p B}{dt^p}(0)=0$ ($p=0,1$) and $\dfrac{d^p C}{dt^p}(0)=0$ ($p\leq k+2$). From Lemma \ref{lemma4.1}, we have 
		\[ \varphi_{Q,1}(f(v_1,v_2,v_3))=(\tanh(\lambda)v_2,\dfrac{v_3}{v_2}), \]
		where $\sinh(\lambda)=v_1$. This directly gives
		\begin{equation}\label{eq1}
			\varphi_{Q,1}(f(v_h(s)))=\begin{pmatrix}
				\tanh(\lambda)\,(1+A(s-s_0))\\
				\dfrac{m_h(s_0)(s-s_0)+B(s-s_0)}{(1+A(s-s_0))}
			\end{pmatrix}.
		\end{equation} 
		From \eqref{eq4.1}, we have $q\circ \tilde{\psi}_Q\circ \varphi_{Q,1}^{-1}(u_1,u_2)=(u_1,u_2u_1)$ which yields that the map-germ $\tilde{\psi}_Q\circ v_h:(I,s_0)\to(d\mathcal{S}^2,\tilde{\psi}_Q\circ v_h(s_0))$ is $C^\infty$ $\mathcal{A}$-equivalent to
		\[ \begin{pmatrix}
			\tanh(\lambda)\,(1+A(s-s_0))\\
			\tanh(\lambda)\,(m_h(s_0)(s-s_0)+B(s-s_0))
		\end{pmatrix}. \]
		Thus the-map germ $\mathcal{P}ed_Q(\mathbf{r_h}):(I,s_0)\to (\mathcal{H}^2_+,\mathcal{P}ed_Q(\mathbf{r_h})(s_0))$ is $C^\infty$ $\mathcal{A}$-equivalent to 
		\begin{equation*} 
			\begin{pmatrix}
				\dfrac{1}{(k+2)!}\ell_h^{(k)}(s_0)m_h(s_0)(s-s_0)^{k+2}+\hat{C}(s-s_0)\smallskip\\
				\dfrac{1}{(k+2)!}\ell_h^{(k)}(s_0)(m_h(s_0))^2(s-s_0)^{k+3}+\hat{B}(s-s_0)
			\end{pmatrix},
		\end{equation*} 
		where $\hat{B}$ and $\hat{C}$ are certain $C^\infty$ function-germs $(\mathbb{R},0)\to(\mathbb{R},0)$ such that $\hat{B}$ has at least $A_{k+3}$ singularity at $0$ and $\hat{C}$ has at least $A_{k+2}$ singularity at $0$. 
		From Lemma \ref{lem4.3} we conclude that $\mathcal{P}ed_Q(\mathbf{r}_h)$ is $C^\infty$ $\mathcal{A}$-equivalent to 
		\[ (t^{k+2}+D_1(t),\:t^{k+3}+D_2(t)), \]
		where $D_i:(\mathbb{R},0)\to(\mathbb{R},0)$ are some $C^\infty$ function germs with $\dfrac{d^pD_i}{dt^p_i}(0)=0$ for $p\leq k+i+1$. 
		
		We consider the following cases.
		
		Let $k+2$ be odd. In this case we need to consider
		\[ h(x_1,x_2)=\big(x_1+D_1({x_1}^{\frac{1}{k+2}}),x_2+D_2({x_1}^{\frac{1}{k+2}})\big). \]
		On the other hand, if $k+3$ is odd, then it is reasonable to consider
		\[ h(x_1,x_2)=\big(x_1+D_1({x_2}^{\frac{1}{k+3}}),x_2+D_2({x_2}^{\frac{1}{k+3}})\big). \]
		Now all we need to do is to show that $h$ is a germ of $C^1$ diffeomorphism for both cases. One can see that both of the maps $x_p\mapsto D_i({x_p}^{\frac{1}{k+p+1}})$ and $x_p\mapsto \dfrac{d D_i({x_p}^{\frac{1}{k+p+1}})}{d x_p}(x_p)$ are well-defined and continuous even at $0$. Moreover it is not hard to see that the Jacobian matrix of $h$ at $(0,0)$ is the unit matrix. Thus $h$ is a germ of $C^1$ diffeomorphism which concludes the proof.
	\end{enumerate}
\end{proof}
\begin{rem}
	As an immediate consequence of Theorem \ref{thm4.2}, if $Q= \mathbf{r}_h(s_0)$ and $\mathbf{r}_h(s_0)$ is a regular point of $\mathbf{r}_h$, then the map-germ $\mathcal{P}ed_Q(\mathbf{r_h}):(I,s_0)\to \big(\mathcal{H}^2_+,\mathcal{P}ed_Q(\mathbf{r_h})(s_0)\big)$ is $C^1$ $\mathcal{A}$-equivalent to the map-germ $(\mathbb{R},0)\to(\mathbb{R}^2,0)$; $t\mapsto(t^2,t^3)$.
\end{rem}
We present an example to Theorem \ref{thm4.2}.
\begin{ex}
	Let $\mathbf{r}_h:I\to\mathcal{H}^2_+$ be the hyperbolic $3/2$ cusp given by $\mathbf{r}_h(s)=(\sqrt{1+s^4+s^6},\,s^2,\, s^3)$. By differentiating $\mathbf{r}_h$ with respect to $s$, we find that
	\[ {\mathbf{r}}'_h(s)=(\dfrac{2s^3+3s^5}{\sqrt{1+s^4+s^6}},\,2s,\,3s^2). \]
	If we take $v_h:I\to d\mathcal{S}^2$ 
	\[v_h(s)=\dfrac{1}{\sqrt{s^6+9s^2+4}}\big(s^3\sqrt{1+s^4+s^6},s^5+3s,s^6-2\big), \]
	then we obtain a spacelike Legendre immersion $(\mathbf{r}_h,v_h):I\to \Delta_1$ with spacelike hyperbolic curvature $(\ell_h,m_h)$, where
	\[\ell_h(s)=\dfrac{s\sqrt{s^6+9s^2+4}}{\sqrt{1+s^4+s^6}}, \qquad m_h(s)=\dfrac{s^{10}+15s^6+10s^4+6}{(s^6+9s^2+4)\sqrt{1+s^4+s^6}}\neq 0. \]
	Furthermore, using the Lorentz vector product we obtain $\mu_h:I\to d\mathcal{S}^2$ 
	\[\mu_h(s)=\dfrac{\sqrt{1+s^4+s^6}}{\sqrt{s^6+9s^2+4}}\big(\dfrac{2s^2+3s^4}{\sqrt{1+s^4+s^6}},2,3s \big). \]
	Choose the point $Q_1=(\sqrt{2},1,0)\in \mathcal{H}^2_+ -\{\mathbf{r}_h(0)\}$. Then by Theorem \ref{thm4.2} (1) the map-germ $\mathcal{P}ed_{Q_1}(\mathbf{r_h}):(I,0)\to \big(\mathcal{H}^2_+,(\sqrt{2},1,0)\big)$ is smooth. The hyperbolic pedal curve of $\mathbf{r}_h$ with respect to $Q_1$ is illustrated in Fig. \ref{fig2}.
	\begin{figure}[H]
		\centering
		\includegraphics[width=0.5\textwidth]{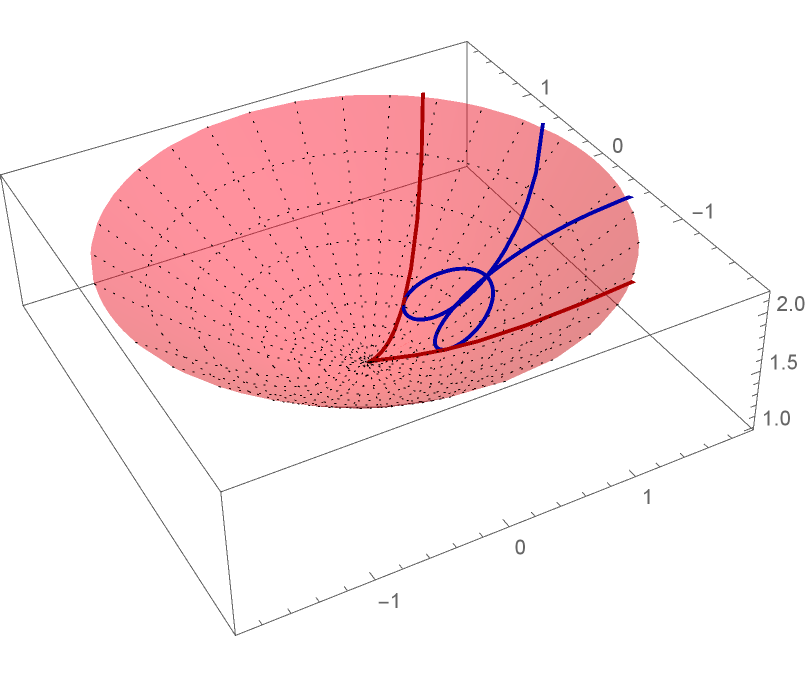}
		\caption{The hyperbolic $3/2$ cusp $\mathbf{r}_h$ (red) and its hyperbolic pedal curve relative to $Q_1=(\sqrt{2},1,0)$ (blue).} \label{fig2}
	\end{figure}
	Now choose $Q_2=(\sqrt{3},1,1)= \mathbf{r}_h(1)$, where $\mathbf{r}_h(1)$ is a regular point of $\mathbf{r}_h$. By Theorem \ref{thm4.2} (2) we have that the map-germ $\mathcal{P}ed_{Q_2}(\mathbf{r_h}):(I,1)\to \big(\mathcal{H}^2_+,(\sqrt{3},1,1)\big)$ is $C^1$ $\mathcal{A}$-equivalent to the map-germ defined by $t\mapsto(t^2,t^3)$. This means that the pedal curve $\mathcal{P}ed_{Q_2}(\mathbf{r}_h)$ has a $3/2$ cusp at $s_0=1$. The hyperbolic pedal of $\mathbf{r}_h$ with respect to $Q_2$ is given in Fig. \ref{fig4}.
	\begin{figure}[H]
		\centering
		\includegraphics[width=0.5\textwidth]{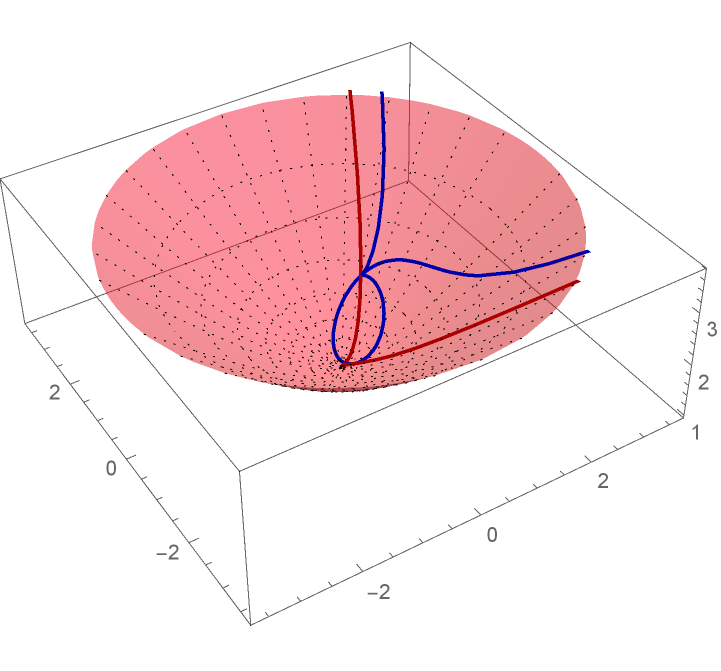}
		\caption{The hyperbolic $3/2$ cusp $\mathbf{r}_h$ (red) and its hyperbolic pedal curve relative to $Q_2=(\sqrt{3},1,1)$ (blue).} \label{fig4}
	\end{figure} 
	Let us take $Q_3=(1,0,0)=\mathbf{r}_h(0)$. It is easy to see that by using the canonical projection $q$, the map-germ $\mathbf{r}_h:(I,0)\to(\mathcal{H}^2_+,(1,0,0))$ is $C^1$ $\mathcal{A}$-equivalent to the map-germ defined by $s\mapsto (s^2,s^3)$. We find that $\ell_h(0)=0$ and $\ell_h'(0)=2\neq0$ which means that $\ell_h$ has an $A_0$-type singularity at $s_0=0$. Thus by Theorem \ref{thm4.2} (2) the map-germ $\mathcal{P}ed_{Q_3}(\mathbf{r_h}):(I,0)\to \big(\mathcal{H}^2_+,(1,0,0)\big)$ is $C^1$ $\mathcal{A}$-equivalent to the map-germ $s\mapsto(s^3,s^4)$. The hyperbolic pedal of $\mathbf{r}_h$ with respect to $Q_3$ is illustrated in Fig. \ref{fig5}.
	\begin{figure}[H]
		\centering
		\includegraphics[width=0.5\textwidth]{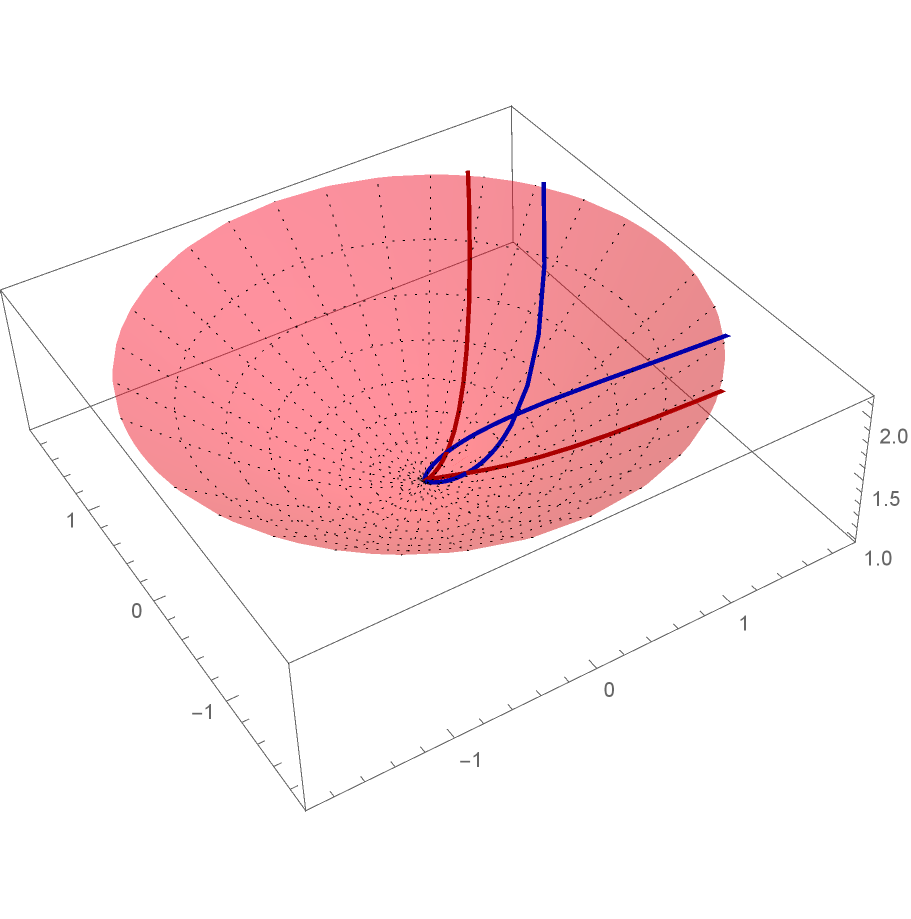}
		\caption{The hyperbolic $3/2$ cusp $\mathbf{r}_h$ (red) and its hyperbolic pedal curve relative to $Q_3=(1,0,0)$ (blue).} \label{fig5}
	\end{figure}
\end{ex}
Now we investigate singularities of pedal curves for points $s_0\in I$ such that $m_h(s_0)=0$. 

Let $s_0\in I$. For any $s$ such that $s+s_0\in I$, define
\[ \gamma_j(s)=(m_h(s+s_0),m_h'(s+s_0),\ldots,m_h^{(j-1)}(s+s_0)),\quad j\geq 1 \]
Then $\gamma^*_j\mathcal{M}_j\mathcal{E}_1$ is an ideal of $\mathcal{E}_1$. We take in consideration quotient $\mathcal{E}_1$ algebras given by $\mathcal{E}_1/(\gamma^*_j\mathcal{M}_j\mathcal{E}_1)$.
\begin{lem}\label{lemma4.4}
	Let $\ell_h$ has an $A_{k-1}$-singularity ($k\geq0$) at $s+s_0$, $s_0\in I$, where $A_{-1}$ means that $\ell_h(s+s_0)\neq0$. Then the followings are satisfied:
	\begin{enumerate}
		\item $\langle v_h^{(j+1)}(s+s_0),v_h(s+s_0)\rangle\in \gamma^*_j\mathcal{M}_j\mathcal{E}_1$.
		\item $\langle v_h^{(j+1)}(s+s_0),\mu_h(s+s_0)\rangle+\gamma^*_j\mathcal{M}_j\mathcal{E}_1=m_h^{(j)}(s+s_0)+\gamma^*_j\mathcal{M}_j\mathcal{E}_1$.
		\item $\langle v_h^{(j+k+2)}(s+s_0),\mathbf{r}_h(s+s_0)\rangle+\gamma^*_j\mathcal{M}_j\mathcal{E}_1=-\binom{j+k+1}{j}\,\ell_h^{(k)}(s+s_0)\, m_h^{(j)}(s+s_0)+\gamma^*_j\mathcal{M}_j\mathcal{E}_1$.
	\end{enumerate}
\end{lem} 
\begin{proof}
	For simplicity, we use just the notation $f$ instead of $f(s+s_0)$. We proceed by induction on $j$. 
	
	For $j=1$, it is enough to show that the followings are satisfied.
	\begin{align*}
		&\langle v_h'',v_h\rangle=-m_h^2,\\
		&\langle v_h'',\mu_h\rangle=m_h',\\
		&\langle v_h^{(k+3)},\mathbf{r}_h\rangle=-(k+2)m_h'\ell_h^{(k)}-m_h\ell_h^{(k+1)}.
	\end{align*}
	Since $\langle v_h,v_h\rangle=1$, we have $\langle v_h',v_h\rangle=0$. Taking derivative of this equality we see that $\langle v_h'',v_h\rangle+\langle v_h',v_h'\rangle=0$ which yields $\langle v_h'',v_h\rangle=-m_h^2$. By \eqref{eq2.3} we know that $\langle v_h',\mu_h\rangle=m_h$. Then we find that $\langle v_h'',\mu_h\rangle+\langle v_h',\mu_h'\rangle=m_h'$ which yields $\langle v_h'',\mu_h\rangle=m_h'$. After direct calculations we get
	\begin{align*}
		v_h^{(k+3)}=&\:m_h^{(k+2)}\mu_h+\binom{k+2}{1}m_h^{(k+1)}(\ell_h\mathbf{r}_h-m_hv_h)+\binom{k+2}{2}m_h^{(k)}(\ell_h\mathbf{r}_h-m_hv_h)'\\
		&+\cdots+m_h(\ell_h\mathbf{r}_h-m_hv_h)^{(k+1)}.
	\end{align*}
	Hence using $A_{k-1}$ singularity of $\ell_h$ we find that $\langle v_h^{(k+3)},\mathbf{r}_h\rangle=-(k+2)m_h'\ell_h^{(k)}-m_h\ell_h^{(k+1)}$.
	
	Now we prove this lemma for $j=i+1$ by assuming that the lemma is satisfied for $j\leq i$. Taking derivative of $\langle v_h^{(i+1)},v_h\rangle\in \gamma^*_i\mathcal{M}_i\mathcal{E}_1$ yields 
	\begin{align*}
		\langle v_h^{(i+2)},v_h\rangle+\langle v_h^{(i+1)},v_h'\rangle\in\gamma^*_{i+1}\mathcal{M}_{i+1}\mathcal{E}_1.
	\end{align*}
	Since $\langle v_h^{(i+1)},v_h'\rangle=m_h\langle v_h^{(i+1)},\mu_h\rangle\in\gamma^*_{i}\mathcal{M}_{i}\mathcal{E}_1$ and $\gamma^*_{i}\mathcal{M}_{i}\mathcal{E}_1\subset\gamma^*_{i+1}\mathcal{M}_{i+1}\mathcal{E}_1$, we obtain $\langle v_h^{(i+2)},v_h\rangle\in\gamma^*_{i+1}\mathcal{M}_{i+1}\mathcal{E}_1$. 
	
	By differentiating $\langle  v_h^{(i+1)},\mu_h\rangle+\gamma^*_i\mathcal{M}_i\mathcal{E}_1=m_h^{(i)}+\gamma^*_i\mathcal{M}_i\mathcal{E}_1$ and using \eqref{eq2.3}, we deduce
	\begin{equation}\label{lemeq1}
		\langle v_h^{(i+2)},\mu_h\rangle+\ell_h\langle v_h^{(i+1)},\mathbf{r}_h\rangle-m_h\langle v_h^{(i+1)},v_h\rangle+\gamma^*_{i+1}\mathcal{M}_{i+1}\mathcal{E}_1=m_h^{(i+1)}+\gamma^*_{i+1}\mathcal{M}_{i+1}\mathcal{E}_1
	\end{equation}
	From the statement (1) of this lemma, we have $\langle v_h^{(i+1)},v_h\rangle\in\gamma^*_i\mathcal{M}_i\mathcal{E}_1$. Now we consider two cases.
	
	Let $\ell_h\neq 0$ i.e. $k=0$. Taking $j=i-1$ ($i\geq1$) in the statement (3) of this lemma, we find that the statement (2) is satisfied for $j=i+1$. 
	
	Now let $\ell_h=0$ i.e. $k\neq 0$. By substituting $\ell_h=0$ into \eqref{lemeq1} we find that $\langle v_h^{(i+2)},\mu_h\rangle+\gamma^*_{i+1}\mathcal{M}_{i+1}\mathcal{E}_1=m_h^{(i+1)}+\gamma^*_{i+1}\mathcal{M}_{i+1}\mathcal{E}_1$.
	Hence we see that the statement (2) of this lemma is satisfied for $j=i+1$.
	
	Finally, by differentiating $\langle v_h^{(i+k+2)},\mathbf{r}_h\rangle+\gamma^*_i\mathcal{M}_i\mathcal{E}_1=-\binom{i+k+1}{i}\,\ell_h^{(k)}\, m_h^{(i)}+\gamma^*_i\mathcal{M}_i\mathcal{E}_1$, we find that
	\begin{align*}
		&\langle v_h^{(i+k+3)},\mathbf{r}_h\rangle+\ell_h\langle v_h^{(i+k+2)},\mu_h\rangle+\gamma^*_{i+1}\mathcal{M}_{i+1}\mathcal{E}_1 \\
		&\qquad\qquad =-\binom{i+k+1}{i}\,(\ell_h^{(k+1)}\,m_h^{(i)}+\ell_h^{(k)}m_h^{(i+1)})+\gamma^*_{i+1}\mathcal{M}_{i+1}\mathcal{E}_1.
	\end{align*}
	Therefore considering two cases based on $\ell_h$ again one can conclude the proof.
\end{proof}
\begin{thm}\label{thm4.6}
	Let $(\mathbf{r}_h,v_h)$ be a spacelike Legendre curve with a spacelike hyperbolic Legendre curvature $(\ell_h,m_h)$, and let $Q\in\mathcal{H}^2_+$ be a point. Suppose that $m_h$ has an $A_{j-1}$ singularity and $\ell_h$ has an $A_{k-1}$ singularity at $s_0\in I$. Then the following statements are satisfied.
	\begin{enumerate}
		\item Let $Q= \mathbf{r}_h(s_0)$. The map-germ $\mathcal{P}ed_Q(\mathbf{r}_h):(I,s_0)\to \big(\mathcal{H}^2_+,\mathcal{P}ed_Q(\mathbf{r}_h)(s_0)\big)$ is $C^1$ $\mathcal{A}$-equivalent to the map-germ $(\mathbb{R},0)\to(\mathbb{R}^2,0)$; $t\mapsto(t^{j+k+2},t^{2j+k+3})$.
		\item If $Q\in G_{v_h(s_0)}-\{\mathbf{r}_h(s_0)\}$, then the map-germ $\mathcal{P}ed_Q(\mathbf{r}_h):(I,s_0)\to (\mathcal{H}^2_+,\mathcal{P}ed_Q(\mathbf{r}_h)(s_0))$ is $C^1$ $\mathcal{A}$-equivalent to the map-germ $(\mathbb{R},0)\to(\mathbb{R}^2,0)$; $t\mapsto(t^{j+1},t^{2j+k+3})$.
		\item If $Q\in \mathcal{H}^2_+- G_{v_h(s_0)}$, then the map-germ $\mathcal{P}ed_Q(\mathbf{r}_h):(I,s_0)\to (\mathcal{H}^2_+,\mathcal{P}ed_Q(\mathbf{r}_h)(s_0))$ is $C^1$ $\mathcal{A}$-equivalent to the map-germ $(\mathbb{R},0)\to(\mathbb{R}^2,0)$; $t\mapsto(t^{j+1},t^{j+k+2})$.
	\end{enumerate}
\end{thm}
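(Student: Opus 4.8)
The plan is to run the blow-up reduction of Theorem \ref{thm4.2} in all three cases, but now with $m_h$ singular, so that the position of $Q$ relative to the geodesic $G_{\nu_h(s_0)}$ — equivalently, relative to the exceptional locus $\{x : \langle Q,x\rangle=0\}$ of the blow up type map $\tilde\psi_Q$ — controls the normalization and hence the leading orders. Throughout I would use the factorization $\mathcal{P}ed_Q(\mathbf{r}_h)=\tilde\psi_Q\circ f\circ\nu_h$ and, after a rotation of $\mathcal{H}^2_+$, normalize $Q=(1,0,0)$ together with the Legendrian frame at $s_0$. The computational engine is Lemma \ref{lemma4.4}: evaluating its three congruences at the point $s_0$ (where every generator of the ideal $\gamma_j^*\mathcal{M}_j\mathcal{E}_1$ vanishes, since $m_h$ has an $A_{j-1}$-type singularity) shows that, measured in the frame at $s_0$, the function $\langle v_h,v_h(s_0)\rangle$ is a unit, $\langle v_h,\mu_h(s_0)\rangle$ has order $j+1$, and $-\langle v_h,\mathbf{r}_h(s_0)\rangle$ has order $j+k+2$, the leading coefficient of the last being a nonzero multiple of $\ell_h^{(k)}(s_0)\,m_h^{(j)}(s_0)$.

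For statement (3), $Q\in\mathcal{H}^2_+-G_{\nu_h(s_0)}$ means $\langle Q,v_h(s_0)\rangle\neq0$, so $v_h(s_0)$ lies off the exceptional locus; there $\tilde\psi_Q$ (of blow up type by Lemma \ref{lemma4.1}) is a local diffeomorphism, and as $f$ is a local diffeomorphism too, $\mathcal{P}ed_Q(\mathbf{r}_h)$ is $\mathcal{A}$-equivalent to the germ $\nu_h$ itself. Reading $\nu_h$ in the chart of $d\mathcal{S}^2$ adapted to the frame at $s_0$, its two components are exactly $\langle v_h,\mu_h(s_0)\rangle$ and $-\langle v_h,\mathbf{r}_h(s_0)\rangle$, of orders $j+1$ and $j+k+2$; applying Lemma \ref{lem4.3} and the explicit $C^1$-diffeomorphism of Theorem \ref{thm4.2}(2) then yields $t\mapsto(t^{j+1},t^{j+k+2})$.

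For statements (1) and (2) we have $Q\in G_{\nu_h(s_0)}$, so $v_h(s_0)$ sits on the exceptional locus and the pedal germ passes through the blow up centre; here I would invoke the chart formula $q\circ\tilde\psi_Q\circ\varphi_{Q,1}^{-1}(u_1,u_2)=(u_1,u_1u_2)$ of Lemma \ref{lemma4.1}, writing the pedal germ as $(\tanh\lambda\,\nu_2,\tanh\lambda\,\nu_3)$ with $\sinh\lambda=\nu_1$. In case (1), $Q=\mathbf{r}_h(s_0)$, the frame is aligned with $Q$, so $\nu_1=-\langle v_h,\mathbf{r}_h(s_0)\rangle$ has order $j+k+2$ and $\nu_3=\langle v_h,\mu_h(s_0)\rangle$ has order $j+1$; the two components then have orders $j+k+2$ and $(j+k+2)+(j+1)=2j+k+3$, which are non-resonant, so no target pre-reduction is needed and one gets $t\mapsto(t^{j+k+2},t^{2j+k+3})$. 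In case (2), $Q\neq\mathbf{r}_h(s_0)$ forces $\mathbf{r}_h(s_0)=(\cosh\beta,0,\sinh\beta)$ with $\beta\neq0$, so both $\nu_1$ and $\nu_3$ acquire order $j+1$ and the second component naively has order $2j+2$; the resonant leading part is removed by a target diffeomorphism subtracting $\coth\beta\cdot(\text{first coordinate})^2$, the point being the identity $\nu_3-\coth\beta\,\nu_1=\tfrac1{\sinh\beta}\langle v_h,\mathbf{r}_h(s_0)\rangle$, whose right side has order $j+k+2$. This reveals the true order $(j+1)+(j+k+2)=2j+k+3$ and gives $t\mapsto(t^{j+1},t^{2j+k+3})$.

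The final step in each case is the normalization used in Theorem \ref{thm4.2}(2): Lemma \ref{lem4.3} and Lemma \ref{lem4.4} reduce each component to a pure power plus a remainder of high vanishing order, and the map $h$ built from fractional powers of a single coordinate is verified to be a germ of $C^1$ (not $C^\infty$) diffeomorphism. I expect the genuine difficulty to lie in two places. First, rigorously pinning down the orders — in particular that the intermediate Taylor coefficients of $-\langle v_h,\mathbf{r}_h(s_0)\rangle$ between degrees $j+1$ and $j+k+2$ all vanish — which is precisely what the inductive congruences of Lemma \ref{lemma4.4} are designed to deliver, and which must be handled with care once the ideal $\gamma_j^*\mathcal{M}_j\mathcal{E}_1$ stops vanishing at higher derivatives. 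Second, the resonant reduction in case (2): making it precise that subtracting a function of the first coordinate actually lowers the second component to order $2j+k+3$, and that the resulting gluing map stays $C^1$, is the main obstacle.
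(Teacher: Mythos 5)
Your proposal follows the paper's proof essentially step for step: the same factorization $\mathcal{P}ed_Q(\mathbf{r}_h)=\tilde{\psi}_Q\circ f\circ \nu_h$ through the blow-up structure of Lemma \ref{lemma4.1}, the same use of Lemma \ref{lemma4.4} (evaluated at $s_0$, where the ideal $\gamma^*_j\mathcal{M}_j\mathcal{E}_1$ vanishes) to pin the orders $j+1$ and $j+k+2$ of the frame components of $\nu_h$ with leading coefficient a nonzero multiple of $\ell_h^{(k)}(s_0)m_h^{(j)}(s_0)$, the same quadratic target shear to kill the resonant order-$(2j+2)$ term in case (2) — the paper realizes your $\coth\beta$-shear via the pair $h_1,h_2$ cited from \cite[Lemma 5.1]{nishimura3}, and your identity $\nu_3-\coth\beta\,\nu_1=\tfrac{1}{\sinh\beta}\langle v_h,\mathbf{r}_h(s_0)\rangle$ correctly isolates the order-$(j+k+2)$ part, giving $(j+1)+(j+k+2)=2j+k+3$ exactly as in the paper's Taylor-expansion computation — and the same Lemma \ref{lem4.3}/Lemma \ref{lem4.4} reduction followed by the fractional-power map $h$ with the odd/even case split verified to be a $C^1$-diffeomorphism. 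Your case (3) shortcut (off the exceptional locus $\tilde{\psi}_Q$ is a local diffeomorphism, so the pedal germ is $\mathcal{A}$-equivalent to $\nu_h$ read in a chart) is a mild streamlining of the paper's explicit chart computation, but identical in substance, so the proposal is correct and matches the paper's route.
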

\begin{proof}
	\begin{enumerate}
		\item By a suitable rotation of $\mathcal{H}^2_+$, we may assume that $Q=(1,0,0)\in\mathcal{H}^2_+$, $\mathbf{r}_h(s_0)=(1,0,0)$, $v_h(s_0)=(0,1,0)$, and $\mu_h=(0,0,1)$ since $Q= \mathbf{r}_h(s_0)$. By Lemma \ref{lemma4.4}, we can take the map-germ $v_h:(I,s_0)\to(d\mathcal{S}^2,v_h(s_0))$ 
		\[ v_h(s)=\begin{pmatrix}
			\dfrac{\binom{j+k+1}{j}}{(j+k+2)!}\ell_h^{(k)}(s_0)m_h^{(j)}(s_0)(s-s_0)^{j+k+2}+C(s-s_0) \\
			1+A(s-s_0)\\
			\dfrac{1}{(j+1)!}m_h^{(j)}(s_0)(s-s_0)^{j+1}+B(s-s_0)
		\end{pmatrix}, \]
		where $A,B$, and $C$ are some $C^\infty$ function germs $(\mathbb{R},0)\to(\mathbb{R},0)$. Furthermore, $A$ and $B$ have $A_{j+1}$ singularity at $0$ while $C$ has at least $A_{j+k+1}$ singularity at $0$. By Lemma \ref{lemma4.1} we know that
		\[\varphi_{Q,1}(f(x_1,x_2,x_3))=(\tanh(\lambda)x_2,\dfrac{x_3}{x_2}),\quad \sinh\lambda = x_1,\]
		which yields
		\begin{equation}\label{eq4.2}
			\varphi_{Q,1}(f(v_h(s)))=\begin{pmatrix}
				\tanh(\lambda)\,(1+A(s-s_0))\\
				\dfrac{ m_h^{(j)}(s_0)(s-s_0)^{j+1}+(j+1)!\,B(s-s_0)}{(j+1)!\,(1+A(s-s_0))}
			\end{pmatrix}.
		\end{equation} 
		Since $q\circ \tilde{\psi}_Q\circ \varphi_{Q,1}^{-1}(u_1,u_2)=(u_1,u_2u_1)$, it follows from \eqref{eq4.2} that the map-germ $\tilde{\psi}_Q\circ v_h:(I,s_0)\to(d\mathcal{S}^2,\tilde{\psi}_Q\circ v_h(s_0))$ is $C^\infty$ $\mathcal{A}$-equivalent to
		\[ \begin{pmatrix}
			\tanh(\lambda)\,(1+A(s-s_0))\\
			\tanh(\lambda)\,\dfrac{ m_h^{(j)}(s_0)(s-s_0)^{j+1}+(j+1)!\,B(s-s_0)}{(j+1)!}
		\end{pmatrix}. \]
		Since the first component of $v_h$ is $\sinh\lambda$ we see that the-map germ $\mathcal{P}ed_Q(\mathbf{r_h}):(I,s_0)\to (\mathcal{H}^2_+,\mathcal{P}ed_Q(\mathbf{r_h})(s_0))$ is $C^\infty$ $\mathcal{A}$-equivalent to 
		\begin{equation*}\label{eq4.3}
			\begin{pmatrix}
				\dfrac{\binom{j+k+1}{j}}{(j+k+2)!}\ell_h^{(k)}(s_0)m_h^{(j)}(s_0)(s-s_0)^{j+k+2}+\hat{C}(s-s_0)\\
				\dfrac{\binom{j+k+1}{j}}{(j+k+2)!(j+1)!}\ell_h^{(k)}(s_0)(m_h^{(j)}(s_0))^2(s-s_0)^{2j+k+3}+\hat{B}(s-s_0)
			\end{pmatrix},
		\end{equation*} 
		where $\hat{B}$ and $\hat{C}$ are certain $C^\infty$ function-germs $(\mathbb{R},0)\to(\mathbb{R},0)$. Moreover, $\hat{B}$ has at least $A_{2j+k+3}$ singularity at $0$ while $\hat{C}$ has at least $A_{j+k+2}$ singularity at $0$. 
		
		Since $\dfrac{\binom{j+k+1}{j}}{(j+k+2)!}\ell_h^{(k)}(s_0)m_h^{(j)}(s_0)\neq 0 $ and  $\hat{C}(s-s_0)$ has at least $A_{j+k+2}$-type singularity at $0$, by using Lemma \ref{lem4.3} we find that $\mathcal{P}ed_Q(\mathbf{r}_h)$ is $C^\infty$ $\mathcal{A}$-equivalent to 
		\[ (t^{j+k+2},\:t^{2j+k+3}+D(t)), \]
		where $D:(\mathbb{R},0)\to(\mathbb{R},0)$ is a $C^\infty$ function germ with $\dfrac{d^pD}{dt^p}(0)=0$ for $p\leq 2j+k+3$. 
		
		We have two cases:
		\begin{enumerate}
			\item Let $2j+k+3$ be odd. Consider the map
			\[ h_2(x_1,x_2)=\big(x_1,x_2+D(x_2^{\frac{1}{2j+k+3}})\big). \]
			Since $2j+k+3$ is odd, we see that $x_2\mapsto D(x_2^{\frac{1}{2j+k+3}})$ is well-defined and continuous everywhere. In addition by Lemma \ref{lem4.4} there exists a $C^\infty$ function-germ  $\tilde{D}:(\mathbb{R},0)\to(\mathbb{R},0)$ such that ${D}(t)=t^{2j+k+4}\tilde{D}(t)$. Therefore we get
			\begin{align*}
				\dfrac{dD(x_2^{\frac{1}{2j+k+3}})}{dx_2}=&\lim\limits_{h\to 0}\dfrac{D((x_2+h)^{\frac{1}{2j+k+3}})-D(x_2^{\frac{1}{2j+k+3}})}{h} \\
				=&\lim\limits_{h\to 0}\dfrac{(x_2+h)^{\frac{2j+k+4}{2j+k+3}}\tilde{D}((x_2+h)^{\frac{1}{2j+k+3}})-x_2^{\frac{2j+k+4}{2j+k+3}}\tilde{D}(x_2^{\frac{1}{2j+k+3}})}{h}\\
				=&\lim\limits_{h\to 0}\dfrac{(x_2+h)^{\frac{2j+k+4}{2j+k+3}}-x_2^{\frac{2j+k+4}{2j+k+3}}}{h}\tilde{D}((x_2+h)^{\frac{1}{2j+k+3}})\\&+x_2^{\frac{2j+k+4}{2j+k+3}}\lim\limits_{h\to 0}\dfrac{\tilde{D}((x_2+h)^{\frac{1}{2j+k+3}})-\tilde{D}(x_2^{\frac{1}{2j+k+3}})}{h}\\
				=&\dfrac{2j+k+4}{2j+k+3}x_2^{\frac{1}{2j+k+3}}\tilde{D}(x_2^{\frac{1}{2j+k+3}})\\&+x_2^{\frac{2}{2j+k+3}}\tilde{D}'(x_2^{\frac{1}{2j+k+3}})\dfrac{1}{2j+k+3}.
			\end{align*}
			Hence $x_2\mapsto \dfrac{dD(x_2^{\frac{1}{2j+k+3}})}{dx_2}$ is well-defined and continuous everywhere. Since we have $\dfrac{dD(x_2^{\frac{1}{2j+k+3}})}{dx_2}(0)=0$, the Jacobian matrix of $h_2$ at $(0,0)$ is the unit matrix. So $h_2$ is a germ of $C^1$-diffeomorphism.
			\item Let $2j+k+3$ be even. Consider
			\[ h_3(x_1,x_2)=(x_1,x_2+\hat{D}(x_2)), \]
			where 
			\[ \hat{D}(x_2)=\begin{cases}
				D(x_2^{\frac{1}{2j+k+3}})\quad&;\quad x_2\geq0\smallskip\\
				-D((-x_2)^{\frac{1}{2j+k+3}})\quad&;\quad x_2<0.
			\end{cases} \]
			Then $x_2\mapsto\hat{D}(x_2)$ is well-defined and continuous even at $x_2=0$. Furthermore, we find that
			\[ \hat{D}'(x_2)=\begin{cases}
				D'(x_2^{\frac{1}{2j+k+3}})\dfrac{1}{(2j+k+3)x_2^{\frac{2j+k+2}{2j+k+3}}}\quad &;\quad x_2>0\smallskip\\
				D'((-x_2)^{\frac{1}{2j+k+3}})\dfrac{1}{(2j+k+3)(-x_2)^{\frac{2j+k+2}{2j+k+3}}}\quad&;\quad x_2<0\smallskip \\
				0\quad&;\quad x_2=0
			\end{cases} \]
			Then $x_2\mapsto \dfrac{d\hat{D}(x_2^{\frac{1}{2j+k+3}})}{dx_2}$ is also well-defined and continuous everywhere. In addition the Jacobian matrix of $h_3$ at $(0,0)$ is the unit matrix. So $h_3$ is a germ of $C^1$-diffeomorphism. Therefore the map-germ $\mathcal{P}ed_Q(\mathbf{r}_h)$ is $C^1$ $\mathcal{A}$-equivalent to the map-germ given by $t\mapsto(t^{j+k+2},t^{2j+k+3})$.
		\end{enumerate}
		\item Let $Q=(1,0,0)$. By an appropriate rotation of $\mathcal{H}^2_+$, we may assume that $v_h(s_0)=(0,1,0)$. Moreover we can take $\mathbf{r}_h(s_0)=(a,0,b)$ and $\mu(s_0)=(b,0,a)$ such that $a^2-b^2=1$ and $a,0\neq b\in\mathbb{R}$ since $Q\in G_{v_h(s_0)}-\{\mathbf{r}_h(s_0)\}$. Thus by Lemma \ref{lemma4.4} we can take the map-germ $v_h:(I,s_0)\to(d\mathcal{S}^2,v_h(s_0))$ 
		\[ v_h(s)=\begin{pmatrix}
			a\gamma+b\delta\\
			1+A(s-s_0)\\
			b\gamma+a\delta
		\end{pmatrix}, \]
		where 
		\begin{align*}
			\delta(s)&=\dfrac{1}{(j+1)!}m_h^{(j)}(s_0)(s-s_0)^{j+1}+B(s-s_0) \\
			\gamma(s)&=\dfrac{\binom{j+k+1}{j}}{(j+k+2)!}\ell_h^{(k)}(s_0)m_h^{(j)}(s_0)(s-s_0)^{j+k+2}+C(s-s_0)
		\end{align*}
		and $A,B$, and $C$ are some $C^\infty$ function germs $(\mathbb{R},0)\to(\mathbb{R},0)$. Furthermore, $A$ and $B$ have at least $A_{j+1}$ singularity at $0$, while $C$ has at least $A_{j+k+1}$ singularity at $0$. Hence we find that 
		\begin{equation*} 
			\varphi_{Q,1}(f(v_h(s)))=\begin{pmatrix}
				\tanh(\lambda)\,(1+A(s-s_0))\\
				\dfrac{b\gamma+a\delta}{(1+A(s-s_0))}
			\end{pmatrix},
		\end{equation*}
		where $\sinh\lambda=a\gamma+b\delta$. For a linear isomorphism $h_1:\mathbb{R}^2_1\to\mathbb{R}^2_1$ given by $h_1(u_1,u_2)=(u_1,u_2+\frac{b}{a}u_1)$ and a $C^\infty$ diffeomorphism $h_2:\mathbb{R}^2\to\mathbb{R}^2$ given by $h_2(U_1,U_2)=(U_1,U_2+\frac{b}{a}U_1^2)$, it is easy to show that the following is satisfied \cite[Lemma 5.1]{nishimura3}.
		\[ q\circ \tilde{\psi}_Q\circ\varphi_{Q,1}^{-1}\circ h_1(u_1,u_2)=h_2\circ q\circ \tilde{\psi}_Q\circ\varphi_{Q,1}^{-1}(u_1,u_2). \]
		By taking $u_1=\tanh(\lambda)\,(1+A(s-s_0))$ and $u_2=\dfrac{b\gamma+a\delta}{(1+A(s-s_0))}$ we find that $q\circ \tilde{\psi}_Q\circ\varphi_{Q,1}^{-1}\circ h(u_1,u_2)$ is $C^\infty$ $\mathcal{A}$-equivalent to $\mathcal{P}ed_Q(\mathbf{r}_h)$ near $s_0$. Then using Taylor expansions $q\circ \tilde{\psi}_Q\circ\varphi_{Q,1}^{-1}\circ h_1(u_1,u_2)$ can be written as 
		\begin{equation*} 
			\begin{pmatrix}
				b\dfrac{1}{(j+1)!}m_h^{(j)}(s_0)(s-s_0)^{j+1}+\hat{C}(s-s_0)\smallskip\\
				b^2 \dfrac{\binom{j+k+1}{j}}{(j+k+2)!(j+1)!}\ell_h^{(k)}(s_0)(m_h^{(j)}(s_0))^2(s-s_0)^{2j+k+3}+\hat{B}(s-s_0)
			\end{pmatrix},
		\end{equation*}
		where $\hat{B}$ and $\hat{C}$ are some $C^\infty$ function germs $(\mathbb{R},0)\to(\mathbb{R},0)$. Moreover $\hat{B}$ has at least  $A_{2j+k+3}$ singularity at $0$ while $\hat{C}$ has at least $A_{j+1}$ singularity at $0$. By using Lemma \ref{lem4.3} and Lemma \ref{lem4.4}, we find that $\mathcal{P}ed_Q(\mathbf{r}_h)$ is $C^\infty$ $\mathcal{A}$-equivalent to
		\[ (t^{j+1},\:t^{2j+k+3}+D(t)), \]
		where $D:(\mathbb{R},0)\to(\mathbb{R},0)$ is a $C^\infty$ function-germ with $\dfrac{d^pD}{dt^p}(0)=0$ for $p\leq 2j+k+3$. 
		
		Hence the proof follows from similar arguments to the proof of (1).
		\item 
		Let $Q=(1,0,0)$. Since $Q\in \mathcal{H}^2_+- G_{v_h(s_0)}$, we have $\langle Q,v_h(s_0)\rangle\neq 0$. Then we choose $v_h(s_0)=(1,0,\sqrt{2})$, $\mathbf{r}_h(s_0)=(\sqrt{2},0,1)$, and $\mu_h(s_0)=(0,1,0)$. By Lemma \ref{lem4.4} we can take the map-germ $v_h:(I,s_0)\to(d\mathcal{S}^2,v_h(s_0))$ is $C^\infty$
		\begin{align*}
			v_h(s)=\begin{pmatrix}
				1+A(s-s_0)\\
				\dfrac{1}{(j+1)!}m_h^{(j)}(s_0)(s-s_0)^{j+1}+B(s-s_0)\\
				\sqrt{2}+\dfrac{\binom{j+k+1}{j}}{(j+k+2)!}\ell_h^{(k)}(s_0)m_h^{(j)}(s_0)(s-s_0)^{j+k+2}+C(s-s_0)
			\end{pmatrix},
		\end{align*}
		where $A,B$, and $C$ are some $C^\infty$ function germs $(\mathbb{R},0)\to(\mathbb{R},0)$. Moreover $A$ and $B$ have at least $A_{j+1}$ singularity at $0$ while $C$ has at least $A_{j+k+1}$ singularity at $0$. From Lemma \ref{lem4.3}, applying suitable scales and reflections
		along coordinate axes of $\mathbb{R}^2_1$ if necessary, we find that $\mathcal{P}ed_Q(\mathbf{r_h})$ is $C^\infty$ $\mathcal{A}$-equivalent to
		$(t^{j+1},t^{j+k+2}+D(t)), $
		where $D:(\mathbb{R},0)\to(\mathbb{R},0)$ is $C^\infty$ function germ with $\dfrac{d^pD}{dt^p}(0)=0$ for $p\leq j+k+2$. By considering two cases similar to (1), one can conclude the proof.
	\end{enumerate}
\end{proof}
\begin{ex}
	Consider the curve $\mathbf{r}_h(s)=(\sqrt{1+s^6+s^{14}}, s^3,s^7)$ in $\mathcal{H}^2_+$. Differentiating this equation with respect to $s$ yields
	\[ {\mathbf{r}}'_h(s)=(\dfrac{3s^5+7s^{13}}{\sqrt{1+s^6+s^{14}}},\,3s^2,\,7s^6).\]
	Taking $v_h:I\to d\mathcal{S}^2$
	\[v_h(s)=\dfrac{1}{\sqrt{16s^{14}+49s^{8}+9}}\big(4s^7\sqrt{1+s^6+s^{14}},7s^4+4s^{10},4s^{14}-3\big), \]
	we find that $(\mathbf{r}_h,v_h):I\to\Delta_1$ is a spacelike Legendre curve with
	\[\ell_h(s)=\dfrac{s^2\sqrt{16s^{14}+49s^{8}+9}}{\sqrt{1+s^6+s^{14}}}, \qquad m_h(s)=\dfrac{4s^3(16s^{20}+70s^{14}+30s^6+21)}{(16s^{14}+49s^{8}+9)\sqrt{1+s^6+s^{14}}}.\]
	It is easy to show that $m_h(s)$ has an $A_2$ singularity at $0\in I$. Take  $Q=(1,0,0)=\mathbf{r}_h(0)\in\mathcal{H}^2_+$. Then $\ell_h(0)=0$, $\ell'_h(0)=0$, and $\ell_h''(0)\neq 0$. So $\ell_h$ has an $A_1$ singularity at $s_0=0$.  Thus by Theorem \ref{thm4.6} (2) the map-germ $\mathcal{P}ed_Q(\mathbf{r}_h):(I,0)\to \big(\mathcal{H}^2_+,(1,0,0)\big)$ is $C^1$ $\mathcal{A}$-equivalent to the map-germ given by $s\mapsto(s^7,s^{11})$. The hyperbolic pedal curve of $\mathbf{r}_h$ with respect to $Q$ is illustrated in Fig. \ref{fig6}.
	\begin{figure}[H]
		\centering
		\includegraphics[width=0.6\textwidth]{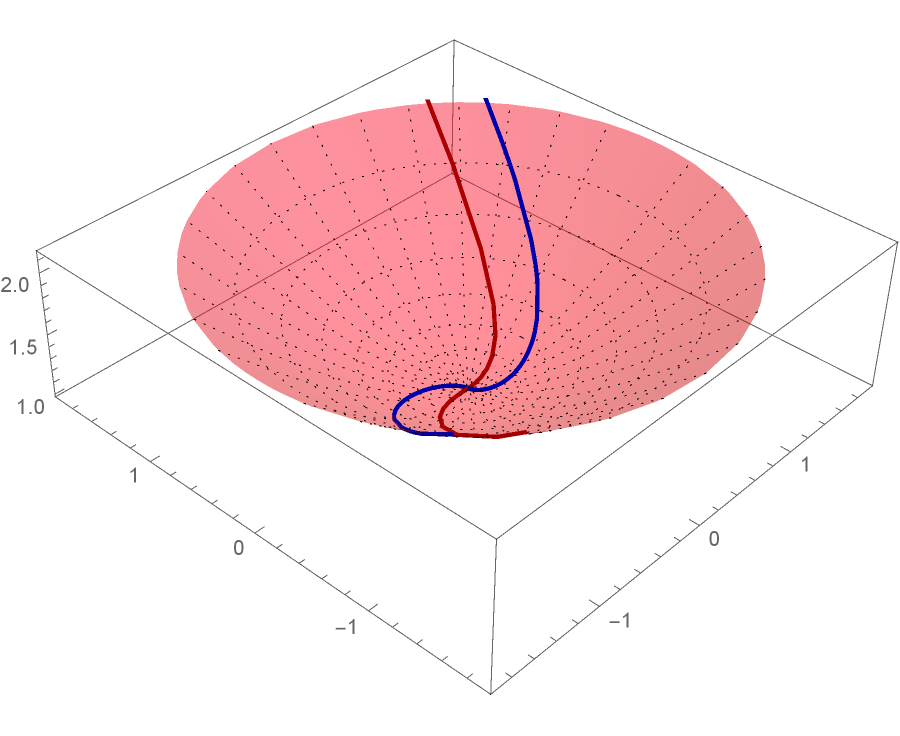}
		\caption{The hyperbolic pedal curve (blue) of $\mathbf{r}_h$ (red) with respect to $Q=(1,0,0)$} \label{fig6}
	\end{figure}
\end{ex}

\section{Conclusions}
We have introduced hyperbolic pedal curves of spacelike frontals in the hyperbolic 2-space. We have then investigated the singularities of these hyperbolic pedal curves for non-singular and singular dual curve germs. 

There are several fruitful research directions we could pursue using the ideas in this paper. 

In a companion paper \cite{tuncer2} we will investigate hyperbolic orthotomics and hyperbolic caustics by using hyperbolic pedal curves defined in this paper. Next we will investigate similar problems for frontals in the de Sitter 2-space. 

In this paper we have focused on the curves. In the future we will consider pedal surfaces of framed surfaces.

\section*{Statements and Declerations}
\subsection*{Conflict of Interests} The authors declare that they have no known conflict of interests.
\subsection*{Funding} There is no funding for this paper.

\end{document}